\tikzset{cross/.style={cross out, draw, 
         minimum size=2*(#1-\pgflinewidth), 
         inner sep=0pt, outer sep=0pt}}
\newtheorem{theorem}{Theorem}
\newtheorem{proposition}{Proposition}
\newtheorem{lemma}{Lemma}
\newtheorem{corollary}{Corollary}
\theoremstyle{definition}
\newtheorem{example}{Example}
\DeclareMathOperator{\id}{id}
\DeclareMathOperator{\diag}{diag}
\newcommand{\trace}{\mathrm{Tr}}
\newcommand{\GL}{\mathrm{GL}}
\newcommand{\SL}{\mathrm{SL}}
\newcommand{\C}{\mathbb{C}}
\newcommand{\R}{\mathbb{R}}
\newcommand{\Q}{\mathbb{Q}}
\newcommand{\N}{\mathbb{N}}
\newcommand{\eps}{\varepsilon}
\title{On  the joint spectral radius}
\author{Emmanuel Breuillard \thanks{DPMMS, Wilberforce Road, University of Cambridge, CB30WB, U.K., breuillard@maths.cam.ac.uk}}
\begin{document}

\maketitle

\par\vspace*{.01\textheight}{\centering \emph{Dedicated to the memory of Jean Bourgain} \par}

\begin{abstract}
For a bounded subset $S$ of $d\times d$ complex matrices, the Berger-Wang theorem and Bochi's inequality allow to approximate the joint spectral radius of $S$ from below by the spectral radius of a short product of elements from $S$. Our goal is two-fold: we review these results, providing self-contained proofs, and we derive an improved version with explicit bounds that are polynomial in $d$. We also discuss other complete valued fields.
\end{abstract}

\section{Introduction}

\bigskip

We denote by $\|\cdot\|$ a norm on $\C^d$ and its associated operator norm on the ring of $d\times d$ matrices $M_d(\C)$. For a bounded subset $S \subset M_d(\C)$ we let $\|S\|:=\sup_{s \in S} \|s\|$. The \emph{joint spectral radius} \cite{rota-strang, berger-wang, daubechies-lagarias, wirth, jungers} is defined by:
\begin{equation}\label{joint} \rho(S):=\lim_{n \to +\infty} \|S^n\|^{\frac{1}{n}}\end{equation} where $S^n:=\{s_1\cdot\ldots\cdot s_n, s_i \in S\}$ is the $n$-th fold product set.
From the submultiplicativity of the operator norm it is clear that the limit exists, is independent of the choice of norm and coincides with the infimum of all $\|S^n\|^{\frac{1}{n}}$, $n \ge1$. A straightforward consequence is that $S \mapsto \rho(S)$ is upper-semicontinuous for the Hausdorff topology. Moreover $\rho(S^k)=\rho(S)^k$ for every $k\in \N$. It is also clear that $\rho(gSg^{-1})=\rho(S)$ for every $g \in \GL_d(\C)$. Rota and Strang \cite{rota-strang} observed that $\rho(S)$ is equal  to the infimum of $\|S\|$ as the norm varies among all possible norms on $\C^d$. 
Combined with John's ellipsoid theorem this easily yields: 

\begin{lemma}\label{john}  Given a norm $\|\cdot\|$ on $\C^d$, for any bounded subset $S \subset M_d(\C)$, we have:
$$\rho(S) \leq \inf_{g \in \GL_d(\C)} \|gSg^{-1}\| \leq d \cdot \rho(S).$$
\end{lemma}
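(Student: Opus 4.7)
The lower bound $\rho(S) \leq \|gSg^{-1}\|$ holds for every $g \in \GL_d(\C)$: $\rho$ is conjugation invariant, and the inequality $\rho(T) \leq \|T\|$ follows at once from \eqref{joint} and the submultiplicativity of the operator norm. So the real task is to prove the upper bound $\inf_g \|gSg^{-1}\| \leq d\cdot\rho(S)$.

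The plan is to combine the Rota--Strang identity $\rho(S) = \inf_{\|\cdot\|_*} \|S\|_*$ (recalled just above the lemma) with the complex version of John's ellipsoid theorem. Given $\eps > 0$, I fix a norm $\|\cdot\|_*$ on $\C^d$ with $\|S\|_* \leq \rho(S) + \eps$. The unit ball of any complex-linear norm is invariant under multiplication by $e^{i\theta}$, so by uniqueness its maximal-volume inscribed ellipsoid inherits this $U(1)$-symmetry and is the unit ball of some Hermitian norm. Applying John separately to $\|\cdot\|$ and to $\|\cdot\|_*$, I obtain Hermitian norms $H$ and $H_*$ satisfying
\begin{equation*}
H(x) \leq \|x\| \leq \sqrt{d}\,H(x) \qquad\text{and}\qquad H_*(x) \leq \|x\|_* \leq \sqrt{d}\,H_*(x).
\end{equation*}

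Since any two Hermitian norms on $\C^d$ are linearly isometric, I choose $g \in \GL_d(\C)$ such that $H(gy) = H_*(y)$ for all $y \in \C^d$. Threading the four inequalities above yields
\begin{equation*}
\tfrac{1}{\sqrt{d}}\,\|y\|_* \;\leq\; \|gy\| \;\leq\; \sqrt{d}\,\|y\|_*,
\end{equation*}
and substituting into the identity $\|gsg^{-1}\| = \sup_{y\ne 0}\|gsy\|/\|gy\|$ gives $\|gsg^{-1}\| \leq d\,\|s\|_*$ for every $s \in S$. Taking the supremum over $s$ and letting $\eps \to 0$ concludes the argument.

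The point requiring care is the complex refinement of John's theorem, with constant $\sqrt{d}$ in place of the real constant $\sqrt{2d}$ that one would get from viewing $\C^d$ as $\R^{2d}$; the latter would only yield $2d\cdot\rho(S)$. This refinement is standard: the uniqueness of the maximal-volume inscribed ellipsoid forces it to be $U(1)$-invariant, hence Hermitian, after which the usual contact-point argument can be carried out within the complex structure. I would either cite this strengthening from the convex-geometry literature or include a short self-contained derivation.
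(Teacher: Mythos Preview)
Your proof is correct and follows essentially the same route as the paper: combine the Rota--Strang identity $\rho(S)=\inf_{\|\cdot\|_*}\|S\|_*$ with the complex form of John's ellipsoid theorem (constant $\sqrt{d}$, justified via the $U(1)$-invariance of the maximal-volume ellipsoid), applied once to the given norm and once to a near-optimal norm, to produce the factor $d=\sqrt{d}\cdot\sqrt{d}$. The paper packages the two applications of John into a single ``in particular'' statement comparing two arbitrary norms up to a linear change of variables and a factor $d$, but the content is identical.
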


When $S$ is irreducible (i.e. does not preserve a proper subspace of $\C^d$) it turns out that there is norm such that $\rho(S)=\|S\|$. The existence of such \emph{extremal norms} will be reviewed in Section \ref{barabanov} along with related known facts. It also follows easily from this that $\rho(S)=0$ if and only if the subalgebra $\C[S]$ generated by $S$ is nilpotent.

It turns out that $\rho(S)$ can also be approximated from below by eigenvalues. Let $\Lambda(s)$ be the largest modulus of an eigenvalue of $s \in M_d(\C)$ and $$\Lambda (S):=\max_{s \in S} \Lambda(s).$$  It is clear that $\Lambda(S) \leq \rho(S)$ and thus $\Lambda(S^n)^{\frac{1}{n}} \leq \rho(S)$ for all $n$. When $S$ is a singleton, the classical Gelfand-formula asserts that $\Lambda(s)=\rho(\{s\})$. For several matrices the key fact is as follows:

\begin{theorem}[Berger-Wang \cite{berger-wang}] \label{bg}$$\rho(S)=\limsup_{n \to +\infty} \Lambda(S^n)^{\frac{1}{n}}.$$
\end{theorem}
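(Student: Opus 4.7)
The plan is to proceed by induction on the dimension $d$, reducing the key (irreducible) case to a pigeonhole argument on a single long product. The upper bound $\limsup_n \Lambda(S^n)^{1/n} \leq \rho(S)$ is immediate from $\Lambda(A) \leq \|A\|$, so the content lies in the reverse inequality; the base case $d=1$ is trivial, so assume $d \geq 2$.

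If $S$ admits a common invariant proper subspace $V$, then I would write elements of $S$ in block upper-triangular form with respect to $V \oplus (\C^d/V)$. The diagonal blocks of any product $P = s_1 \cdots s_n$ are themselves products of the corresponding diagonal blocks of the $s_i$, so $\Lambda(P) = \max(\Lambda(P|_V), \Lambda(P|_{\C^d/V}))$, and a routine estimate on the off-diagonal block (which grows only polynomially in $n$ relative to the diagonal ones) yields $\rho(S) = \max(\rho(S|_V), \rho(S|_{\C^d/V}))$. Applying the inductive hypothesis to each subquotient, both of dimension $<d$, disposes of this case.

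In the irreducible case, I would invoke the extremal norm of Section \ref{barabanov} so that $\|S\| = \rho(S) =: r$, and after rescaling take $r = 1$ (the case $r = 0$ is trivial, since then $S$ generates a nilpotent algebra). For a given $\epsilon > 0$ I pick $N$ large and a product $Q = s_N \cdots s_1 \in S^N$ with $\|Q\| \geq (1-\epsilon)^N$, together with a unit vector $v_0$ realizing $\|Q v_0\| = \|Q\|$, and set $v_k := s_k \cdots s_1 v_0$. Submultiplicativity of the extremal norm forces $(1-\epsilon)^N \leq \|v_k\| \leq 1$ for every $k \in \{0,\dots,N\}$. I then cover $\PP(\C^d)$ by $K = K(\delta)$ balls of projective radius $\delta$; pigeonhole on a suitably spaced subsequence of $[v_0], \ldots, [v_N]$ produces indices $0 \leq a < b \leq N$ with $m := b - a \geq N/K$ and $[v_a], [v_b]$ lying in a common ball. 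Setting $P := s_b \cdots s_{a+1} \in S^m$ and writing $P v_a = v_b = \lambda v_a + w$ with $w$ orthogonal to $v_a$, the vector $v_a$ becomes an approximate eigenvector of $P$ with eigenvalue $\lambda$; here $|\lambda|$ is comparable to $\|v_b\|/\|v_a\|$ and $\|w\|/\|v_a\|$ is controlled by $\delta |\lambda|$.

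The main obstacle will be the final step: converting this approximate eigenvector into a genuine eigenvalue of $P$ whose $m$-th root is close to $1$. Starting from $(\lambda I - P) v_a = -w$, I would bound the smallest singular value and hence the determinant of $\lambda I - P$; comparing this to the factorization $|\det(\lambda I - P)| = \prod_i |\lambda - \mu_i|$ over the eigenvalues of $P$ produces some $\mu$ with $|\lambda - \mu| \leq C_d(\delta|\lambda|)^{1/d}$. To close the argument I must calibrate the three parameters $\epsilon, \delta, N$ so that simultaneously $|\lambda|^{1/m} \geq (1-\epsilon)^{N/m} \geq (1-\epsilon)^K$ stays near $1$ and the additive correction $C_d(\delta|\lambda|)^{1/d}$ remains negligible compared to $|\lambda|$. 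For a target $\eta > 0$ I would first fix $\delta$ small (depending on $\eta$ and $d$), then $K = K(\delta)$, then $\epsilon$ small enough that $(1-\epsilon)^K \geq 1 - \eta$, and finally $N$ large enough for the long product $Q$ to exist. This delivers $P \in S^m$ with $\Lambda(P)^{1/m} \geq 1 - O(\eta)$, and letting $\eta \to 0$ completes the induction.
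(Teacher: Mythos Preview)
Your argument is correct and is essentially the pigeonhole strategy the paper attributes to Elsner and carries out in the proof of Theorem~\ref{bg-el}. Two remarks on execution. First, the paper uses a \emph{Barabanov} norm (Lemma~\ref{bar}) rather than merely an extremal one: this lets one choose the $s_i$ so that every $\|v_k\|$ equals $1$ exactly, which removes the need for projective pigeonholing, the spaced subsequence, and the whole $(1-\eps)^N$ bookkeeping. (Even with only an extremal norm you have $\|S^N\|=1$, so you may take $\|Q\|\ge 1-\eps$ outright; your weaker bound $\|Q\|\ge(1-\eps)^N$ is what forces the delicate calibration of $N$ against $K$.) Second, the paper's Lemma~\ref{reg}, based on Cayley--Hamilton, serves the same purpose as your determinant/singular-value step and has the advantage of working directly for an arbitrary operator norm without importing a hermitian structure. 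The paper also records an entirely different route to Theorem~\ref{bg}: apply Bochi's inequality (Theorem~\ref{boc}) to $S^n$ and let $n\to\infty$.
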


An immediate consequence is that $S \mapsto \rho(S)$ is also lower-semicontinuous and hence continuous for the Hausdorff topology. Theorem \ref{bg}  had been conjectured by Daubechies and Lagarias \cite{daubechies-lagarias}. Elsner  \cite{elsner} gave a simple proof of it. In this article we will be interested in giving explicit estimates quantitying this convergence. Our first observation is that in fact the following slightly stronger result holds:

\begin{theorem}\label{el} Let $S\subset M_d(\C)$ be a bounded subset with $\rho(S)>0$. Then $$\limsup_{n \to +\infty} \frac{\Lambda(S^n)}{\rho(S)^n}=1.$$
\end{theorem}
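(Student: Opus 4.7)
Since $\Lambda(S^n) \leq \rho(S)^n$ for every $n$, only the lower bound $\limsup_n \Lambda(S^n)/\rho(S)^n \geq 1$ requires work. My plan is to reduce first to the irreducible case, and then to exploit a Barabanov-type extremal norm in order to produce an asymptotically ``periodic'' trajectory that furnishes products $T_k \in S^{m_k}$ with $m_k \to \infty$ whose spectral radii tend to $\rho(S)^{m_k}$.

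If $S$ preserves a nontrivial flag, take an irreducible Jordan--H\"older factor $V$ on which $\rho(S|_V) = \rho(S)$. The eigenvalues of $s|_V$ are among those of $s$, so $\Lambda((S|_V)^n) \leq \Lambda(S^n)$, and it suffices to treat the irreducible case. Replacing $S$ by its closure does not affect $\rho(S)$ or $\Lambda(S^n)$, so we may further assume $S$ is compact; after rescaling, $\rho(S) = 1$.

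By the extremal-norm theory to be recalled in Section \ref{barabanov}, irreducibility of $S$ supplies a Barabanov norm $\|\cdot\|$ on $\C^d$ with $\|S\|=1$ satisfying $\max_{s \in S}\|sv\| = \|v\|$ for every $v$. Pick any unit vector $v_0$ and inductively select $s_n \in S$ realising the maximum, producing unit vectors $w_n := s_n \cdots s_1 v_0$. By compactness of the unit sphere, one can extract $n_1 < n_2 < \cdots$ with $n_{k+1} - n_k \to \infty$ and $w_{n_k} \to w_\infty$. Set $T_k := s_{n_{k+1}} \cdots s_{n_k + 1} \in S^{m_k}$ with $m_k := n_{k+1} - n_k$; then $\|T_k\| \leq 1$ and $T_k w_{n_k} = w_{n_{k+1}}$. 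A further subsequence ensures $T_k \to T_\infty$ in $M_d(\C)$, and the uniform bound $\|T_k\| \leq 1$ together with $w_{n_k}, w_{n_{k+1}} \to w_\infty$ forces $T_\infty w_\infty = w_\infty$. Hence $1$ is an eigenvalue of $T_\infty$, and combined with $\|T_\infty\| \leq 1$ this gives $\Lambda(T_\infty) = 1$.

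Continuity of $\Lambda$ on $M_d(\C)$ now implies $\Lambda(T_k) \to 1$, so $\Lambda(S^{m_k}) \geq \Lambda(T_k) \to 1 = \rho(S)^{m_k}$, which proves the theorem. The main technical input is the existence of the Barabanov norm and its self-sustaining extremal trajectory $(w_n)$: without these the operators $T_k$ are not a priori bounded and the compactness argument collapses. Everything else is a straightforward diagonal extraction combined with the continuity of eigenvalues on $M_d(\C)$.
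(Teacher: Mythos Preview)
Your proof is correct. The reduction to the irreducible case and the use of a Barabanov norm are shared with the paper, but from there the two arguments diverge. The paper introduces the attractor semigroup $T_\infty=\bigcap_{n}\overline{S^nT}$ and proves the structural Lemma~\ref{idem}: $T_\infty$ contains a non-zero idempotent; this immediately gives $\Lambda(T_\infty)=1$ and hence the theorem. You bypass the semigroup formalism entirely: you build the Barabanov trajectory $(w_n)$ directly, extract a limiting operator with a fixed unit vector by compactness, and conclude via continuity of $\Lambda$. This is precisely the qualitative version of the pigeonhole argument that the paper sketches in the introduction and later makes quantitative in Theorem~\ref{bg-el}. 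Your route is shorter and more elementary; the paper's route yields the additional structural fact that $T_\infty$ contains an idempotent, which is of independent interest and does not follow from your argument (your limit operator is only shown to have $1$ as an eigenvalue).
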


The question of the speed of convergence in Theorem \ref{bg} or \ref{el} is an interesting one and goes back at least to the Lagarias-Wang finiteness conjecture \cite{lagarias-wang}, which posited that the limsup should be attained at a certain finite $n$. This has been disproved by Bousch and Mairesse \cite{bousch-mairesse} for $2 \times 2$ matrices (see also \cite{jenkinson-pollicott,jems, hare-sidorov-morris}) and Morris (see \cite[Thm 2.7]{oregon-reyes-gromov}) gave an example with  $S=\{a,b\}\subset \SL_2(\R)$. In general counter-examples are thought to be rare. 

Elsner's proof of Theorem \ref{bg} is based on a pigeonhole argument, which we will revisit in this note and can roughly be described as follows: if $\rho(S)=1$, then given a unit vector $x \in \C^d$ we may always find $s \in S$ such that $sx$ is also a unit vector (this follows from the existence of a Barabanov norm, see Section \ref{barabanov}), so iterating this construction we eventually find a short product $w=s_n\cdot \ldots \cdot s_k$ with $wy$ close to $y=s_{k-1}\cdot \ldots \cdot s_1 x$, implying that $w$ has an eigenvalue close to $1$. This idea also leads to a proof of Theorem \ref{el} and to the following quantitative and explicit version:

\begin{theorem}\label{bg-el}  Let $S\subset M_d(\C)$ be a bounded subset with $\rho(S)=1$. Set $n_0(d)=3^d4^{d^2}$ and let $\eps>0$. If $n \ge \eps^{-d^2}n_0(d)$, then
$$\max_{k \leq n} \Lambda(S^k) \ge 1-\eps. $$
\end{theorem}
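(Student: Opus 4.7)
My plan is to implement, quantitatively, the Elsner-type pigeonhole sketched in the paragraph preceding Theorem~\ref{bg-el}. First I would reduce to the case where $S$ acts irreducibly on $\C^d$: by induction on $d$, together with the observation that if $\C^d$ admits a proper $S$-invariant subspace $V$ then $\rho(S)=\max\{\rho(S|_V),\rho(S|_{\C^d/V})\}$ and every eigenvalue of the restricted action of a word $w$ is also an eigenvalue of $w$ itself, the problem reduces to the irreducible case; the recursion over dimensions $\le d$ is designed to be absorbed by the factor $3^d$ in $n_0(d)$. For irreducible $S$ with $\rho(S)=1$, I invoke the Barabanov norm $\|\cdot\|$ on $\C^d$ from Section~\ref{barabanov}: $\|s\|\le 1$ for all $s\in S$, and for every unit vector $x$ there is $s\in S$ with $\|sx\|=1$. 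Starting from an arbitrary unit vector $x_0$, I would greedily pick $s_k\in S$ with $\|s_k x_{k-1}\|=1$ and form $x_k:=s_k\cdots s_1 x_0=w_k x_0$, obtaining $n+1$ unit vectors $x_0,\ldots,x_n$ indexed by word length $k$.

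The quantitative heart of the argument is a pigeonhole on an invariant attached to $x_k$ (or to $w_k$). A naive pigeonhole on the projective classes $[x_k]\in\PP^{d-1}(\C)$ only detects an approximate eigenvector: if $\|x_k-\lambda x_j\|\le\delta$ for $j<k$ then $w:=s_k\cdots s_{j+1}$ satisfies $(w-\lambda I)x_j$ small, and conversion to an eigenvalue through the determinantal identity $|\det(w-\lambda I)|=\prod_i|\lambda_i(w)-\lambda|$ combined with $\|w\|\le 1$ yields some $|\lambda_i(w)|\ge 1-O(\delta^{1/d})$; requiring $\delta=\eps^d$ then forces $n$ to exceed a covering number of $\PP^{d-1}(\C)$ of order $\eps^{-2d(d-1)}$, which is too large. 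To reach the announced exponent $\eps^{-d^2}$, the pigeonhole must be applied to a richer invariant at which an $\eps$-collision (rather than an $\eps^d$-collision) already detects a genuine eigenvalue near modulus $1$. Following the idea behind Bochi's inequality, I would carry along the induced data of $w_k$ on the exterior powers $\Lambda^j\C^d$ for $j=1,\ldots,d-1$ (equivalently, track a full flag adapted to $x_0$), whose parameter space has real dimension comparable to $d^2$. Extending Barabanov extremality to these induced representations via John's Lemma~\ref{john} keeps the orbits bounded, so a volume comparison bounds the $\eps$-covering number by $(C/\eps)^{d^2}$; a near-collision at resolution $\eps$ in this richer invariant means that $w=s_k\cdots s_{j+1}$ nearly fixes a full flag, yielding $d$ approximate eigenvalues all of modulus $\ge 1-\eps$.

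The main obstacle is exactly this last step: identifying the correct invariant so that a pigeonhole at resolution $\eps$ in a $d^2$-dimensional parameter space already produces an approximate eigenvalue, rather than a mere approximate eigenvector that then needs the lossy determinantal extraction. The extension of the Barabanov property to exterior powers and the precise volume/covering constants are what give the concrete numerics $3^d\cdot 4^{d^2}$; once these are in place, the final accounting is routine, and the conclusion $\max_{k\le n}\Lambda(S^k)\ge 1-\eps$ follows from $\Lambda(w)\ge 1-\eps$ and the fact that $w$ has length $k-j\le n$.
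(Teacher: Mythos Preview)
The paper's proof is precisely the ``naive'' pigeonhole you describe in your second paragraph and then discard. After the reduction to irreducible $S$ (done by passing to a single diagonal block $S_{ii}$ with $\rho(S_{ii})=1$, no recursion needed), the paper takes the Barabanov orbit $x_0,x_1,\ldots$ on the unit sphere, sets $\delta=(\eps/4)^d$, pigeonholes at scale $\delta$ in the unit ball for $v$, and then applies Lemma~\ref{reg} (the Cayley--Hamilton/characteristic--polynomial extraction, which is exactly your determinantal step) to convert $v(Ax_j-x_j)<\delta$ with $\|A\|\le 1$ into $\Lambda(A)\ge 1-4\delta^{1/d}=1-\eps$. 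No exterior powers, no flags.

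Your suspicion about the exponent is in fact well placed: the paper writes the packing bound as $(1+\tfrac{2}{\delta})^{d}$, but the unit ball of a norm on $\C^d$ has real dimension $2d$, so the standard volume argument gives $(1+\tfrac{2}{\delta})^{2d}$. Taken at face value this yields $n\ge \eps^{-2d^2}\cdot 9^d16^{d^2}$ rather than the announced $\eps^{-d^2}\cdot 3^d4^{d^2}$; the discrepancy looks like a slip in the paper rather than a hidden idea. So you were right to flag the arithmetic, but wrong to conclude that the intended proof must involve a richer invariant.

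Your alternative via $\bigoplus_j\Lambda^j\C^d$ is not a proof as it stands. Two concrete gaps: (i) the Barabanov equality $\max_{s}\|sx\|=\|x\|$ does not propagate to the induced action on $\Lambda^j\C^d$, so there is no reason the exterior orbits stay on a fixed sphere --- without this you cannot run the pigeonhole there; (ii) even granting a near-collision ``on flags'', you still need a quantitative statement that an operator with $\|w\|\le 1$ that $\eps$-almost preserves a full flag has an eigenvalue with $|\lambda|\ge 1-O(\eps)$, and you have not supplied one. The paper sidesteps both issues by staying on $\C^d$ and paying the $d$-th root via Lemma~\ref{reg}.
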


This yields a polynomial decay of the form $|\sup_{k\leq n} \Lambda(S^k)-1|=O_{S,d}(n^{-1/d^2})$ in Theorem \ref{el} when $\rho(S)=1$. In \cite{morris-adv} Morris proved a much stronger super-polynomial upper bound on the speed of convergence: that is $|\sup_{k\leq n} \Lambda(S^k)-1|=O_{A,S}(n^{-A})$ for every $A\ge 1$, provided $S$ is finite and $\rho(S)=1$. However the implied constant is not explicit. He also points out that his argument fails when $S$ is infinite. 

 In this note we will be interested in the $d$ aspect. The bound on $n$ in Theorem \ref{bg-el} is super-exponential in $d$. If we aim to approximate the joint spectral radius no longer up to a small error, but only up to a constant multiple, we can expect  polynomial bounds in $d$. In this vein Bochi \cite[Theorem  B]{bochi} established the following general inequality:

\begin{theorem}[Bochi \cite{bochi}]\label{boc} There are constants $c(d)>0,N(d)>0$ such that for every bounded set $S \subset M_d(\C)$ we have:
\begin{equation}\label{bocc}\max_{1 \leq k \leq N(d)} \Lambda(S^k)^{\frac{1}{k}} \ge c(d) \cdot \rho(S).\end{equation}
\end{theorem}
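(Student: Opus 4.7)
The plan is to prove Theorem \ref{boc} by induction on the dimension $d$, combining a reducibility dichotomy with a quantitative closing lemma on orbits for a Barabanov norm. By homogeneity of $\rho$ I normalize so that $\rho(S) = 1$, and by Lemma \ref{john} I conjugate $S$ to ensure $\|S\| \leq d$ in the ambient Hermitian norm. The goal is then to produce some $w \in S^k$ with $k \leq N(d)$ and $\Lambda(w) \geq c(d)$.

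In the reducible case, where $S$ preserves a proper subspace $V \subset \C^d$, it is standard that $\rho(S) = \max\{\rho(S|_V),\,\rho(S|_{\C^d/V})\}$, since the off-diagonal block of a product grows only polynomially compared to the exponential growth governed by the diagonal blocks. One of these block representations has joint spectral radius $1$ and lives in dimension $d' < d$; the inductive hypothesis produces a short product $\bar w$ there with $\Lambda(\bar w) \geq c(d')$, which lifts to $S^k$ itself because the spectrum of a block-triangular matrix is the union of its diagonal-block spectra. Henceforth assume $S$ is irreducible.

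For irreducible $S$, Section \ref{barabanov} supplies a Barabanov norm $\|\cdot\|_\star$: every $s \in S$ satisfies $\|s\|_\star \leq 1$, and each $\|\cdot\|_\star$-unit vector $v$ admits $s \in S$ with $\|sv\|_\star = 1$. Build an orbit $v_0,\ldots,v_N$ of unit vectors by iteratively realizing the recurrence at each $v_{i-1}$. Fix an $\eps$-net $\mathcal{N}$ in $\PP^{d-1}$ of cardinality $O_d(\eps^{-(2d-2)})$. Taking $N = |\mathcal{N}|+1$, pigeonhole yields indices $i < j$ with $[v_i]$ and $[v_j]$ within projective distance $\eps$. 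Setting $w = s_j\cdots s_{i+1}$ we have $\|w\|_\star \leq 1$ and $wv_i = \lambda v_i + u$ with $|\lambda| \geq 1-\eps$ and $\|u\|_\star \leq \eps$. Then $(\lambda\cdot\id - w)v_i = -u$ while $\|\lambda\cdot\id-w\|_\star \leq 2+\eps$, so $|\det(\lambda\cdot\id-w)| \leq 3^{d-1}\eps$. Since $\det(\lambda\cdot\id-w) = \prod_k(\lambda-\lambda_k(w))$, some eigenvalue of $w$ lies within $3\eps^{1/d}$ of $\lambda$, giving $\Lambda(w) \geq 1-4\eps^{1/d}$. Choosing $\eps \asymp 8^{-d}$ yields $\Lambda(w)\geq 1/2$, hence $\Lambda(w)^{1/k}\geq 1/2$, with $k \leq N(d)$ at most exponential in $d^2$.

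The main obstacle is the quantitative closing step above: the exponent $1/d$ coming from extracting a single root in the determinantal bound forces $\eps$ to be exponentially small in $d$ and hence yields an $N(d)$ exponential in $d^2$ through this naive pigeonhole. Obtaining genuinely polynomial bounds in $d$, as targeted elsewhere in this paper, requires running the closing argument in parallel on all exterior powers $\wedge^k\C^d$---whose projective realizations are Grassmannians of dimension $O(d^2)$---and amortising the single-root loss across dimensions by combining multiplicative singular-value estimates. This simultaneous-closing refinement is the central technical difficulty in the sharper quantitative version.
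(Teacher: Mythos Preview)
Your argument is correct and is essentially the paper's proof of Theorem \ref{bg-el} (Barabanov orbit of unit vectors, pigeonhole to close the orbit, then a characteristic-polynomial estimate---your determinantal root-extraction is a variant of the paper's Lemma \ref{reg}), and the paper itself notes that Theorem \ref{bg-el} implies Theorem \ref{boc} with $N(d)=3^d8^{d^2}$ and $c(d)=1/2$. However, the paper's \emph{direct} proof of Theorem \ref{boc}, given immediately after the statement as Claim 1, is entirely different and non-constructive: one passes to a Hausdorff limit of putative counter-examples (using Lemma \ref{john} to keep them in a bounded region) to obtain $S$ with $\rho(S)\ge 1$ such that the convex hull of $S\cup\cdots\cup S^{\ell(d)}$ contains no scalar multiple of a non-zero idempotent; then $\C[S]$ contains no idempotent, hence is nilpotent by Artin--Wedderburn, forcing $S^d=0$ and contradicting $\rho(S)\ge 1$. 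That route yields $N(d)=\ell(d)\le d^2$ (indeed $O(d\log d)$ by Shitov) but a non-explicit $c(d)$; your route gives an explicit $c(d)$ at the cost of $N(d)$ exponential in $d^2$.

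Your closing speculation about exterior powers is not how the paper achieves polynomial bounds in Theorem \ref{pol-bd}. Instead the paper replaces the geometric pigeonhole by a Siegel-type lemma (Lemma \ref{siegel}): with only $n=2d^2$ orbit points $x_i$ one finds small integers $c_i$, not all zero, with $\|\sum c_i x_i\|$ tiny. Isolating the first non-zero coefficient turns this into an approximate eigenvector equation for a \emph{convex combination} $A$ of products of length at most $n$, and Lemma \ref{reg} gives $\Lambda(A)\gtrsim 1/d^4$. A trace/Newton-identity estimate (Lemma \ref{convex}) then transfers the lower bound on $\Lambda(A)$ back to $\max_{k\le nd}\Lambda(S^k)^{1/k}$. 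The saving comes from the linear-algebraic pigeonhole, not from running the closing lemma on Grassmannians.
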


Note that Theorem \ref{bg} (but not Theorem \ref{el}) follows immediately from Bochi's inequality: indeed apply the inequality to $S^n$ and let $n$ tends to infinity. On the other hand Theorem \ref{bg-el} implies Bochi's inequality with $N(d)=3^d8^{d^2}$ and $c(d)=\frac{1}{2}$ say. We are interested in quantifying the constants $c(d)$ and $N(d)$ in terms of the dimension $d$. Example \ref{exd} (2) below shows that $N(d)\ge d$. Bochi's proof gave $N(d)=2^d-1$, but a non-constructive $c(d)$ obtained via a topological argument involving some geometric invariant theory. 

In \cite[2.7, 2.9]{breuillard-height} another non-constructive proof was given with $N(d)=d^2$. This proof actually allows to take for $N(d)=\ell(d)$ the smallest upper bound on the integer $k$ such that for any $S\subset M_d(\C)$ the powers $S,\ldots,S^{k}$ span linearly the matrix algebra $\C[S]$ generated by $S$. It is immediate that $\ell(d)\leq d^2$, but in a recent breakthrough Shitov \cite{shitov} has proved that $\ell(d)\leq 2d(\log_2 d +2)$ greatly improving an earlier bound in $O(d^{3/2})$ due to Pappacena \cite{pappacena}.

In order to motivate our main result and since it is very short, we give now a direct proof of Theorem \ref{boc} using the following slight variant of the argument from \cite{breuillard-height}:  after rescaling to $\rho(S)=1$ and setting $c(d)=c'(d)/d$ and $N(d)=\ell(d)$, Claim 1 immediately implies Theorem \ref{boc}.\\

\noindent {\bf Claim 1.} {\it There is $c'(d)>0$ such that for every bounded subset $S$ of $M_d(\C)$ with $\rho(S)=1$ there is a non-zero idempotent $p \in M_d(\C)$ (i.e. $p^2=p$) such that $c'(d) p$ belongs to the complex convex hull of $S,\ldots,S^{\ell(d)}$.}

\begin{proof}By the complex convex hull $Conv(Q)$ of $Q \subset M_d(\C)$, we mean the set of linear combinations $\alpha_1q_1+\ldots+\alpha_nq_n$ with $q_i \in Q$ and $|\alpha_1|+\ldots+|\alpha_n|=1$.  Since the problem is invariant under conjugation, in view of Lemma \ref{john} we may assume that $S$ is confined to a bounded region of $M_d(\C)$, allowing us to pass to a Hausdorff limit of potential counter-examples to the claim. By compactness and upper semi-continuity of the joint spectral radius we get a bounded subset $S$ with $\rho(S)\ge1$, but such that $Conv(S \cup \ldots \cup S^{\ell(d)})$ contains no scalar multiple of  an idempotent. In particular $\C[S]$ contains no  idempotent. By the Artin-Wedderburn theorem this means that $\C[S]$ is a nilpotent subalgebra of $M_d(\C)$. In particular $S^d=0$, which is in contradiction with $\rho(S^d)=\rho(S)^d\ge1$.
\end{proof}

As with Bochi's original argument, this one does not give any explicit estimate on the constant $c(d)$. It is however possible to ``effectivise'' the argument just given: this requires effectivising the proof of Wedderburn's  theorem and, after a fairly painstaking analysis, the details of which we will spare the reader, yields a rather  poor lower bound on $c(d)$ of doubly exponential type in $d$.  Another route is to use an idea appearing in the work of Oregon-Reyes \cite[Rk. 4.5]{oregon-reyes}, which consists in using the effective arithmetic nullstellensatz by making explicit the implication $\{\trace(S^k)=0$ for all $k=1,\ldots,\ell(d)\}$ $\Rightarrow$ $\{S^d=0\}$. This also yields an effective bound on $c(d)$, which is again unfortunately  rather poor, at least doubly exponential in $d$. 


The following result, which is the main contribution of this note, gives explicit polynomial bounds on both $c(d)$ and $N(d)$. 

\begin{theorem}\label{pol-bd} For every bounded set $S \subset M_d(\C)$ we have:
$$\max_{1 \leq k \leq 2d^3} \Lambda(S^k)^{\frac{1}{k}} \ge \frac{1}{2^8d^5} \cdot \rho(S).$$
In particular
$$\max_{1 \leq k \leq N_2(d)} \Lambda(S^k)^{\frac{1}{k}} \ge \frac{1}{2} \cdot \rho(S),$$
where $N_2(d)=2d^3\lceil 8+5\log_2 d\rceil$.
\end{theorem}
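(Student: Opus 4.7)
The plan is to argue by contrapositive, combining John's ellipsoid (Lemma~\ref{john}) with an effective Cayley--Hamilton analysis. Since all the quantities appearing are invariant under conjugation by $g\in\GL_d(\C)$, after rescaling I may assume $\rho(S)=1$, and Lemma~\ref{john} lets me conjugate so that $\|S\|\leq d$. Writing $\delta:=(2^8 d^5)^{-1}$, it suffices to show: if $\Lambda(s)<\delta^k$ for every $k\leq 2d^3$ and every $s\in S^k$, then $\rho(S)<1$, giving a contradiction.

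First, for any $s\in S^n$ with $n\leq 2d^2$ the standing hypothesis gives $\Lambda(s)<\delta^n$, so the coefficients of the characteristic polynomial of $s$ satisfy
\[
|e_j(s)|\leq\binom{d}{j}\Lambda(s)^j<\binom{d}{j}\delta^{jn}\qquad(j=1,\ldots,d),
\]
since $e_j$ is a symmetric function of $d$ eigenvalues with $\binom{d}{j}$ monomials. (Alternatively, one could use the hypothesis only for traces $|\trace(s^j)|\leq d\Lambda(s)^j$ and Newton's identities.)

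Second, I would feed these coefficient bounds into Cayley--Hamilton to bound the norm of long products. For each $s\in S^n$ one has $s^d=\sum_{j=1}^{d}(-1)^{j+1}e_j(s)\,s^{d-j}$, whence
\[
\|s^d\|\leq\sum_{j=1}^{d}|e_j(s)|\,\|s\|^{d-j}\leq\sum_{j=1}^{d}\binom{d}{j}\delta^{jn}d^{n(d-j)}.
\]
Extending this to a product $s_1\cdots s_d$ of \emph{distinct} elements of $S^n$ — via a multilinear polynomial identity on $M_d(\C)$ such as Amitsur--Levitzki — would then yield $\|S^{Nd}\|^{1/(Nd)}<1$ for an appropriate $N=O(d^2)$, which contradicts $\rho(S)=1$. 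The main obstacle, and the source of the precise constants in the theorem, is the numerical calibration at this point: the coefficient bound $|e_j(s)|\leq\binom{d}{j}\delta^{jn}$ must dominate the trivial upper bound $\|s\|^{d-j}\leq d^{n(d-j)}$, and the choice $n=2d^2$ (so $nd=2d^3$) together with the explicit $2^8$ and $d^5$ in $\delta$ are tuned so this balance just barely holds.

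The ``in particular'' statement follows formally by applying the first inequality to $S^m$ in place of $S$, with $m=\lceil 8+5\log_2 d\rceil$: then $(2^8 d^5)^{1/m}\leq 2$, while $\rho(S^m)=\rho(S)^m$ and $\Lambda((S^m)^{k'})=\Lambda(S^{mk'})$ for $k'\leq 2d^3$, so taking $m$-th roots gives $\max_{k'\leq 2d^3}\Lambda(S^{mk'})^{1/(mk')}\geq \rho(S)/(2^8d^5)^{1/m}\geq \rho(S)/2$, which is the claimed bound for some $k=mk'\leq 2md^3=N_2(d)$.
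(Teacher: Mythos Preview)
Your argument has a genuine gap at the step you yourself flag as ``the main obstacle''. Cayley--Hamilton controls $s^d$ for a \emph{fixed} $s$, but $\|S^{nd}\|$ is a supremum over products $s_1\cdots s_d$ with possibly distinct $s_i\in S^n$, and no invocation of Amitsur--Levitzki bridges this: that identity concerns the alternating sum $\sum_\sigma \mathrm{sgn}(\sigma)\,A_{\sigma(1)}\cdots A_{\sigma(2d)}$, not an individual product, and the multilinear trace identities of Procesi--Razmyslov do not yield a clean norm bound of the required shape either. This is exactly the difficulty that separates the joint spectral radius from the classical one, and it cannot be waved through.

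There is also a numerical problem upstream. Using only Lemma~\ref{john} you get $\|S\|\le d$, hence $\|s\|^{d-j}\le d^{n(d-j)}$ for $s\in S^n$. In your displayed Cayley--Hamilton bound the $j=1$ term is $d\cdot\delta^{n}\cdot d^{n(d-1)}=d\,(\delta\,d^{\,d-1})^{n}$, and $\delta\,d^{\,d-1}=d^{\,d-6}/2^{8}>1$ already for $d\ge 7$; raising to the power $n=2d^2$ makes this enormous, not small. So even for the single-matrix step the calibration fails with this norm. The paper avoids this by working with a Barabanov norm (Lemma~\ref{bar}), which gives $\|S\|=1$ exactly, not $\le d$.

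The paper's actual proof is structurally different. It picks a Barabanov norm and a trajectory $x_n=s_n\cdots s_1x$ on the unit sphere, then uses a Siegel-type pigeonhole (Lemma~\ref{siegel}) to find a short integer relation $\sum c_i x_i\approx 0$. Dividing through produces a single matrix $A$ in the complex convex hull of $S\cup\ldots\cup S^{n}$ together with an explicit approximate eigenvector, so Lemma~\ref{reg} gives a lower bound on $\Lambda(A)$. Finally Lemma~\ref{convex} (Newton's identities plus subadditivity of the trace) transfers this to $\max_{k\le nd}\Lambda(S^k)^{1/k}$. The point is that one manufactures a \emph{single} matrix with an approximate eigenvalue, rather than trying to bound all long products simultaneously; this sidesteps precisely the multilinear obstacle you ran into.

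Your derivation of the ``in particular'' clause from the first inequality (apply it to $S^m$ with $m=\lceil 8+5\log_2 d\rceil$) is correct and matches the paper.
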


The factor $\frac{1}{2}$ can of course be replaced by any number $\kappa<1$ provided $N_2(d)$ is replaced by $N_{\kappa^{-1}}(d):=2d^3\lceil\log_{\kappa^{-1}} (2^8d^5)\rceil$. The proof exploits a different kind of pigeonhole argument, where one argues, as in the classical Siegel lemma in number theory, that some non-zero linear combination with small integer coefficients of the iterates $s_n\cdot \ldots \cdot s_1x$ will vanish or be very small. In turn, this forces one of the products to have a spectral radius bounded away from zero. 

The following natural questions then arise: \bigskip

\noindent \emph{Questions:} How sharp is the bound $d^{3+o(1)}$ on $N_2(d)$? We only know that $N_2(d)$ must be at least $d$. Is there a polynomial bound on $c'(d)$ in Claim 1 above?\\

In \cite[Theorem A]{bochi} Bochi proves another inequality, giving this time a lower bound on $\rho(S)$ in terms of the norms of $S^n$, which, when iterated, gives a speed of convergence for $(\ref{joint})$, see \cite{kozboc}.  Given any norm $\|\cdot\|$ on $\C^d$, 

\begin{equation}\label{bocA} \|S^d\| \leq C_0(d) \rho(S) \|S\|^{d-1}.
\end{equation}

While no explicit bound  on $C_0(d)$ was given in \cite{bochi}, his proof gives a super-polynomial bound in $d^{3d/2}$ (see \cite[Section 4]{kozboc}). It turns out that the pigeonhole argument for our Theorem \ref{pol-bd} gives a polynomial bound for $(\ref{bocA})$ at the expense of increasing the power of $S$:

\begin{theorem}\label{bocnewA} Let $S\subset M_d(\C)$ be a bounded subset and set $n_1=2d^2$. Then 
\begin{equation}\label{bocAplus}\|S^{n_1}\| \leq 2^7 d^4 \rho(S) \|S\|^{n_1-1}.\end{equation}
\end{theorem}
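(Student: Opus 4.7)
The plan is to adapt the Siegel-lemma pigeonhole technique introduced for Theorem~\ref{pol-bd} to the present norm-version. By homogeneity under scaling of $S$ we may assume $\|S\|=1$, reducing the claim to $\|S^{n_1}\|\leq 2^7 d^4 \rho(S)$. Fix a near-maximizing configuration: elements $s_1,\ldots,s_{n_1}\in S$ and a unit vector $x\in\C^d$ with $L:=\|s_{n_1}\cdots s_1 x\|$ approximately equal to $\|S^{n_1}\|$. Setting $v_k:=s_k\cdots s_1 x$, we obtain $n_1+1=2d^2+1$ orbit vectors in $\C^d$, each satisfying $L\leq\|v_k\|\leq 1$; the lower bound holds because $\|s_{n_1}\cdots s_{k+1}\|\leq 1$ implies $L\leq\|v_k\|$.

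Next, apply a pigeonhole on integer combinations of the $v_k$'s, viewed as vectors in $\C^d\cong\R^{2d}$. Enumerating $\sum_k \alpha_k v_k$ with $\alpha_k\in\{0,1,\ldots,N\}$ gives $(N+1)^{n_1+1}$ points lying in a Euclidean ball of radius $\leq n_1 N$; partitioning into cubes of side $\varepsilon$ forces two enumerations to coincide up to $\varepsilon$, and their difference produces a non-trivial integer relation
$$\sum_{k=0}^{n_1}\beta_k v_k=\eta,\qquad |\beta_k|\leq N,\quad \|\eta\|\leq \varepsilon\sqrt{2d},$$
valid whenever $\varepsilon \gtrsim n_1 N (N+1)^{-(n_1+1)/(2d)}$. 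The choice $n_1=2d^2$ is precisely what makes the pigeonhole exponent $(n_1+1)/(2d)$ at least $d$, so that a moderate polynomial $N$ in $d$ forces $\|\eta\|$ to be much smaller than $L/(2^7 d^4)$.

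Letting $i:=\min\{k:\beta_k\ne 0\}$ and factoring $v_k=(s_k\cdots s_{i+1})v_i$ for $k\geq i$, the relation rewrites as $P v_i=\eta$, where
$$P := \sum_{k=i}^{n_1}\beta_k\, s_k\cdots s_{i+1}\in M_d(\C)$$
is a non-zero polynomial in the $s_j$'s with coefficients in $\{-N,\ldots,N\}$. Since $\|v_i\|\geq L$ while $\|\eta\|\ll L$, the operator $P$ has a near-kernel vector of large norm. This structural constraint is then converted into a lower bound on $\rho(S)$ through an Artin--Wedderburn/Cayley--Hamilton argument in the spirit of Claim~1: if every sub-product $s_k\cdots s_{i+1}$ appearing in $P$ had spectral radius below $L/(2^7 d^4)$, the combined near-nilpotence of these summands would force $\|Pv_i\|$ to be too small to be consistent with $\|v_i\|\geq L$. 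Hence some sub-product has spectral radius at least $L/(2^7 d^4)$, giving $\rho(S)\geq L/(2^7 d^4)$ and thus $\|S^{n_1}\|\leq 2^7 d^4 \rho(S)$, which is the required inequality (in normalized form).

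The main obstacle is this last implication: turning the abstract near-kernel relation $Pv_i\approx 0$ into a concrete sub-product with spectral radius at least $L/(2^7 d^4)$, while bookkeeping the numerical constants carefully enough to extract precisely the exponents $d^4$ and $n_1=2d^2$ rather than larger polynomial bounds. This parallels the delicate spectral-extraction step in the proof of Theorem~\ref{pol-bd}, where the same pigeonhole produces the analogous statement for short products.
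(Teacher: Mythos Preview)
Your setup is exactly the paper's: normalize to $\|S\|=1$, follow a near-extremal orbit $v_k=s_k\cdots s_1x$ with $\|v_k\|\ge L$, apply the Siegel pigeonhole (Lemma~\ref{siegel}) with $n=n_1=2d^2$, and isolate the first nonzero coefficient to obtain a relation of the form $P\,v_i=\eta$ with $\|\eta\|$ very small. Up to this point your outline matches the paper's proof essentially verbatim.

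The gap is in your extraction step. You assert that if every sub-product $s_k\cdots s_{i+1}$ had small spectral radius, then ``near-nilpotence'' of the summands would force $\|Pv_i\|$ to be too small. This implication is not justified and is in fact false as stated: matrices with small spectral radius can have norm $1$, and a sum of nilpotents can be the identity. Nothing about small spectral radii of the individual summands constrains $\|Pv_i\|$. There is a second problem: even if you could extract a single sub-product $w=s_k\cdots s_{i+1}$ with $\Lambda(w)\ge L/(2^7d^4)$, this only yields $\rho(S)\ge \Lambda(w)^{1/(k-i)}$, not $\rho(S)\ge \Lambda(w)$, so you would lose a root of order up to $n_1$ and miss the stated constant entirely.

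The paper's extraction is both simpler and goes in a different direction. Write $P=\beta_i\,\id+\sum_{k>i}\beta_k\,s_k\cdots s_{i+1}$, set $N=\sum_{k>i}|\beta_k|$, $\lambda=-\beta_i/N$, and $A=\tfrac{1}{N}\sum_{k>i}\beta_k\,s_k\cdots s_{i+1}$. Then $Pv_i=\eta$ reads $Av_i-\lambda v_i=\eta/N$ with $\|A\|\le 1$, $|\lambda|\ge 1/N$, and $\|v_i\|\ge L$: this is an \emph{approximate eigenvector equation}, and Lemma~\ref{reg} (a direct Cayley--Hamilton computation, not Artin--Wedderburn) gives $\Lambda(A)\gtrsim 1/N\ge 1/(n_1T)$. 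The final link is the elementary observation that $A$ lies in the complex convex hull of $\bigcup_{k\ge 1}S^k$, hence $A^m\in Conv(\bigcup_{k\ge m}S^k)$ for every $m$, and the Rota--Strang infimum norm gives $\Lambda(A)\le\rho(S)$ directly. No sub-product needs to be singled out. Your outline would be repaired by replacing the last paragraph with exactly this: apply Lemma~\ref{reg} to $(A,\lambda,v_i/\|v_i\|)$, then invoke $\Lambda(A)\le\rho(S)$.
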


Iterating $(\ref{bocAplus})$ yields an explicit estimate quantifying the convergence in $(\ref{joint})$ improving the bounds obtained in \cite[Theorem 1]{kozboc}.





Finally we examine what happens when the field $\C$ is replaced by an arbitrary algebraically closed complete valued field $(K,|\cdot|)$. By Ostrowski's theorem, if $K$ is not $\C$ it must be non-archimedean (for instance $\C_p$ the completion of the algebraic closure of the field of $p$-adic numbers $\Q_p$, or the completion of the field of Laurent series over the algebraic closure of $\mathbb{F}_p$). All of the above makes sense of course and the joint spectral radius is defined in the same way. As it turns out, the analogues of the results above are much simpler for such $K$, the Lagarias-Wang finiteness conjecture holds in a uniform way, and in fact:

\begin{theorem}\label{ultra-thm} Let $K$ be an algebraically closed  non-archimedean complete valued field. Consider an ultrametric norm $\|\cdot \|_0$ on $K^d$ and a bounded subset $S$ of $M_d(K)$. Then
\begin{equation}\label{boc-ultra}\max_{1 \leq k \leq \ell(d)} \Lambda(S^k)^{\frac{1}{k}} = \rho(S)=\inf_{g \in \GL_d(K)} \|gSg^{-1}\|_0.\end{equation}
Moreover, $\rho(S)>0$ if and only if the subalgebra generated by $S$ is not nilpotent, in which case there is an ultrametric norm $\|\cdot\|$ on $K^d$ with $\|S\|=\rho(S)$.
\end{theorem}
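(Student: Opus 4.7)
My plan is to establish the theorem in three parts: the equivalence $\rho(S)>0 \iff K[S]$ non-nilpotent, the existence of an extremal ultrametric norm (yielding the Rota--Strang equality), and the Berger--Wang-type equality.

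First I would dispatch the equivalence. One direction is trivial: if $K[S]$ is nilpotent then $S^d=0$ and $\rho(S)=0$. For the converse, if $\rho(S)=0$ then for every $s\in\bigcup_k S^k$ the non-archimedean Gelfand formula for a single matrix gives $\Lambda(s)=\rho(\{s\})\leq \rho(S^k)=0$, so every element of the semigroup $\bigcup_k S^k$ is a nilpotent matrix; Levitzki's theorem then simultaneously triangularizes this semigroup, whence $K[S]$ is nilpotent.

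The central construction is the extremal norm when $\rho(S)>0$. After rescaling to $\rho(S)=1$ and reducing to the irreducible case via a maximal $S$-invariant flag (in the ultrametric world the identity $\rho(S)=\max_i\rho(S_i)$ for block-triangular $S$ is clean, and extremal norms on blocks combine easily), Burnside's theorem gives $K[S]=M_d(K)$. By the definition of $\ell(d)$ I pick a $K$-basis $s_1,\dots,s_{d^2}$ of $M_d(K)$ with $s_i\in S^{k_i}$, $k_i\leq \ell(d)$, and let $\{t_i\}$ be the trace-dual basis, writing $t_i=\sum_j \beta_{ij}s_j$. For arbitrary $s\in S^n$, the expansion $s=\sum_i \alpha_i(s)s_i$ with $\alpha_i(s)=\trace(s t_i)=\sum_j \beta_{ij}\trace(s s_j)$ combined with the key non-archimedean inequality
$$|\trace(u)|\leq \Lambda(u)\leq \rho(\{u\})\leq \rho(S)^{\mathrm{length}(u)}=1$$
(applied to $u=s s_j\in S^{n+k_j}$) bounds $|\alpha_i(s)|$ by $B:=\max_{ij}|\beta_{ij}|$ uniformly in $n$. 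Hence $\sup_n\|S^n\|_0<\infty$, and $\|x\|_1:=\sup_n\|S^n x\|_0$ is a finite ultrametric norm with $\|S\|_1=\rho(S)$, the sought extremal norm. The Rota--Strang equality then follows, since over an algebraically closed complete non-archimedean field every ultrametric norm on $K^d$ is orthogonalizable, so $\|\cdot\|_1=\|h\cdot\|_0$ for some $h\in\GL_d(K)$, whence $\|h^{-1}Sh\|_0=\rho(S)$ realizes the infimum.

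For the Berger--Wang equality, I would work in the extremal norm: $S$ preserves the unit lattice, so the reduction $\overline{K[S]}\subset M_d(k)$ modulo the maximal ideal is non-nilpotent (else $\|S^N\|_1<1$ for some $N\leq d$, contradicting $\rho(S)=1$). The hardest step, which I anticipate as the main obstacle, is to find a short product $t\in\bigcup_{k\leq \ell(d)} S^k$ whose reduction $\bar t$ is non-nilpotent in $M_d(k)$: if all such reductions were nilpotent then trace vanishes on $\overline{K[S]}$ (by linearity on a spanning set), and one concludes $\overline{K[S]}$ is itself nilpotent (directly via Newton's identities and Engel's theorem in residue characteristic zero or $>d$; via a more refined Wedderburn/Levitzki argument otherwise), a contradiction. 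Once such $t\in S^k$ is found, non-nilpotency of $\bar t$ yields $|c_j(t)|=1$ for some $j$, and the non-archimedean Newton polygon formula $\Lambda(t)=\max_j|c_j(t)|^{1/j}\geq 1=\rho(S)^k$ gives $\Lambda(S^k)^{1/k}\geq \rho(S)$, completing the proof.
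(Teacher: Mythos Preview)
Your overall strategy matches the paper's: obtain product-boundedness in the irreducible case via Burnside and the ultrametric trace bound $|\trace(u)|\le\Lambda(u)$, build an extremal norm $\|x\|_1=\sup_n\|S^nx\|_0$, and for the Berger--Wang equality reduce modulo the maximal ideal and appeal to Wedderburn/Levitzki. There are, however, two genuine technical gaps that the paper has to work to get around.

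First, you rescale to $\rho(S)=1$, which presupposes $\rho(S)\in|K^\times|$. The value group of an algebraically closed complete non-archimedean field is dense in $\R_{>0}$ but need not be all of it; for example $|\C_p^\times|=p^{\Q}$. So there may be no $\lambda\in K$ with $|\lambda|=\rho(S)$, and the rescaling is illegitimate.

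Second, you assert that over such $K$ every ultrametric norm on $K^d$ is orthogonalizable, so that $\|\cdot\|_1=\|h\cdot\|_0$ for some $h\in\GL_d(K)$. This is false: it holds when $K$ is spherically complete, but $\C_p$ is not, and in general one only has \emph{approximate} orthogonality (for each $\alpha>1$ some $g$ with $\|gx\|_0\le\|x\|_1\le\alpha\|gx\|_0$). Approximate orthogonality already suffices for the Rota--Strang infimum (let $\alpha\to1$), so that part is salvageable. But it is not enough for your Berger--Wang step, which needs an honest orthonormal basis for the extremal norm together with $\rho(S)=1$, so that $S$ actually sits inside $M_d(\mathcal O)$ and the reduction modulo $\mathfrak m$ is defined. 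Without both of these your sentence ``$S$ preserves the unit lattice'' has no content.

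The paper removes both obstructions at once by embedding $K$ into a non-principal ultrapower $\mathbf K$. By countable saturation $\mathbf K$ is again complete and algebraically closed, its value group is exactly $\R_{>0}$, and every ultrametric norm on $\mathbf K^d$ admits an orthonormal basis. Over $\mathbf K$ the argument you sketch (which is then essentially identical to the paper's key lemma) goes through cleanly; since $\rho(S)$, $\Lambda(S^k)$ and the infimum over conjugates are unchanged under the field extension, the conclusion descends to $K$.
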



Recall that $\ell(d)$ denotes the smallest integer $k$ such that for any field $F$ and any $S \subset M_d(F)$ the power sets $S,\ldots,S^k$ span linearly the algebra $F[S]$. Obviously $\ell(d)\leq d^2$ and recall that in fact $\ell(d)\leq 2d\log_2d+4d-4$ by \cite{shitov}.

 If $K$ is not algebraically closed, then $(\ref{boc-ultra})$ still holds with $K$ replaced by its algebraic closure $\overline{K}$ (indeed, the absolute value extends uniquely to $\overline{K}$ and the completion of $\overline{K}$ will remain algebraically closed by K\"ursch\'ak's theorem \cite[5.J.]{ribenboim}). 

In the special case when $K$ is a local field and $S$ a compact subgroup of $\GL_d(K)$, the last assertion of the theorem recovers the well-known Bruhat-Tits fixed point theorem: the norm $\|\cdot\|$ will be preserved by $S$ and thus be a fixed point in the Bruhat-Tits building of ultrametric norms \cite{goldman-iwahori}.

Theorem \ref{ultra-thm} was proved in \cite{breuillard-height} for $K=\C_p$. We will give a slightly more direct  proof of the general case. 

Similarly, the analogue of Theorem \ref{bocnewA} reads:

\begin{theorem}\label{bocA-ultra} For any ultrametric norm $\|\cdot\|_0$ on $K^d$ and $S \subset M_d(K)$ bounded
\begin{equation}\|S^d\|_0\leq \rho(S) \|S\|_0^{d-1}.\end{equation}
\end{theorem}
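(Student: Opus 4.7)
The plan is to dispose of the trivial nilpotent case first and then, using the extremal ultrametric norm furnished by Theorem~\ref{ultra-thm}, set up coordinates in which both the given norm $\|\cdot\|_0$ and the extremal norm $\|\cdot\|_*$ are weighted maximum norms; the proof will then reduce to a matrix-entry estimate along index paths together with a pigeonhole on $d+1$ indices in $\{1,\ldots,d\}$. If $\rho(S) = 0$, Theorem~\ref{ultra-thm} tells us $K[S]$ is a nilpotent subalgebra of $M_d(K)$, so it is simultaneously conjugate into the strictly upper triangular matrices and $S^d = \{0\}$, giving the inequality trivially.

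Henceforth assume $\rho := \rho(S) > 0$ and set $N := \|S\|_0$. Theorem~\ref{ultra-thm} provides an ultrametric norm $\|\cdot\|_*$ on $K^d$ with $\|S\|_* = \rho$. The key preliminary move is to find a basis $(f_1,\ldots,f_d)$ of $K^d$ orthogonal for \emph{both} norms, so that for every $v = \sum_i x_i f_i$ one has $\|v\|_0 = \max_i a_i|x_i|$ and $\|v\|_* = \max_i b_i|x_i|$ for positive weights $a_i, b_i$. When $K$ is discretely valued this is the elementary divisor theorem applied to the two unit-ball lattices; for a general complete non-archimedean $K$ one obtains only $(1+\eps)$-orthogonality, and the bound below will be recovered in the limit $\eps \to 0$.

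In such a basis every $s \in S$ satisfies entrywise \emph{both} $|s_{ij}| \le N\, a_j/a_i$ and $|s_{ij}| \le \rho\, b_j/b_i$. For $s_1,\ldots,s_d \in S$ I would expand $(s_1 \cdots s_d)_{ij}$ as a sum over index paths $k_0 = i, k_1, \ldots, k_{d-1}, k_d = j$ and, for each such path, bound $d-1$ of the factors using the $\|\cdot\|_0$-estimate and a single strategically chosen factor $l^\star$ using the $\|\cdot\|_*$-estimate; after telescoping the $a$-ratios this produces
\[\prod_{l=1}^d |(s_l)_{k_{l-1}k_l}| \;\le\; \rho\, N^{d-1}\, \frac{a_j}{a_i}\cdot\frac{c_{k_{l^\star}}}{c_{k_{l^\star-1}}}, \qquad c_i := b_i/a_i,\]
with the freedom to choose $l^\star$ separately for each path.

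The decisive step is the pigeonhole: the $d+1$ indices $k_0,\ldots,k_d$ all lie in $\{1,\ldots,d\}$, so some $k_r = k_s$ with $r<s$; the consecutive $c$-ratios for $l=r+1,\ldots,s$ then multiply to $c_{k_s}/c_{k_r}=1$, so at least one of them is $\le 1$, and I take that index as $l^\star$. This forces every path product to be at most $\rho N^{d-1}(a_j/a_i)$, whence $|(s_1 \cdots s_d)_{ij}| \le \rho N^{d-1}(a_j/a_i)$ by the ultrametric inequality, and finally $\|s_1 \cdots s_d\|_0 = \max_{i,j}(a_i/a_j)|(s_1 \cdots s_d)_{ij}| \le \rho N^{d-1}$. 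The main obstacle is the very first step — the simultaneously orthogonal basis — which is routine for discretely valued $K$ but requires the $\eps$-approximation-and-limit argument alluded to above in the general case, for example over $\C_p$.
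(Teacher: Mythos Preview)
Your argument is correct and its combinatorial heart is identical to the paper's: both expand an entry of $s_1\cdots s_d$ as a sum over index paths $k_0,\ldots,k_d\in\{1,\ldots,d\}$ and use pigeonhole (two indices coincide, so the consecutive ratios along that stretch multiply to $1$, hence one is $\le 1$) to bound one factor differently from the other $d-1$. The packaging differs. The paper does not invoke the extremal norm of Theorem~\ref{ultra-thm}; instead it proves, for \emph{every} diagonal $a\in\GL_d(K)$, the conjugation inequality $\|aS^da^{-1}\|_0 \le \|S\|_0\,\|aSa^{-1}\|_0^{d-1}$ via the same path argument (the role of your $c_i=b_i/a_i$ is played by the diagonal entries of $a$), extends this to arbitrary $a$ via the Cartan decomposition $a=k_1a'k_2$ with $k_i\in\GL_d(\mathcal{O})$, and only then takes the infimum over $a$ using Rota--Strang. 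Your simultaneous-orthogonalization step and the paper's Cartan decomposition are two faces of the same coin, and both approaches must handle non-diagonalizable norms by an approximation or field-extension device (you use an $\eps$-limit, the paper passes to an ultrapower as in the proof of Theorem~\ref{ultra-thm}). The paper's route is marginally more self-contained in that it needs only $\rho(S)=\inf_a\|aSa^{-1}\|_0$, not the existence of an extremal norm; yours is slightly more direct once that existence is granted.
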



\section{Extremal norms and Barabanov norms}\label{barabanov}

In this section we recall some well-known facts about the joint spectral radius and extremal norms providing complete and self-contained proofs. Most of the material can be found in the first chapters of the book \cite{jungers}. We then prove Theorem \ref{el}.

We begin by the observation of Rota and Strang mentioned in the introduction.  Recall that $\|\cdot\|$ denotes both a norm on $\C^d$ and its associated operator norm and that for some subset $Q$ (in either $\C^d$, or $M_d(\C)$), we set  $\|Q\|:=\sup_{q \in Q} \|q\|$.

\begin{lemma}[Rota-Strang] \label{rotas}Let $S\subset M_d(\C)$ be a bounded subset. \begin{equation}\label{rs}\rho(S)=\inf_{\|\cdot\|} \|S\|,\end{equation} where the infimum is over all norms  on $\C^d$.
\end{lemma}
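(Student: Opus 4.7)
The plan is to prove $\rho(S)=\inf_{\|\cdot\|}\|S\|$ by establishing both inequalities. The inequality $\rho(S)\leq\|S\|$ for every norm is immediate from submultiplicativity, since $\rho(S)=\inf_{n\geq 1}\|S^n\|^{1/n}\leq\|S\|$. So the main content is the reverse inequality: for every $\eps>0$, I need to construct a norm $\|\cdot\|_*$ on $\C^d$ such that $\|S\|_*\leq\rho(S)+\eps$.

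The construction is standard. Fix $r>\rho(S)$, let $\|\cdot\|$ be an arbitrary reference norm on $\C^d$ (and its operator norm), and set $S^0:=\{I\}$. Define
\[
\|x\|_r := \sup_{n\geq 0}\, r^{-n}\sup_{w\in S^n}\|wx\|.
\]
First I would check finiteness: since $\|S^n\|^{1/n}\to\rho(S)<r$, there exists $N$ with $\|S^n\|\leq r^n$ for all $n\geq N$, while for $n<N$ we have the crude bound $\|S^n\|\leq\|S\|^n$. Thus $r^{-n}\|S^n\|\leq M$ uniformly in $n$ for some constant $M\geq 1$, giving $\|x\|_r\leq M\|x\|<\infty$. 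Positivity follows from the $n=0$ term, which recovers $\|x\|$, so $\|x\|_r\geq\|x\|>0$ for $x\neq 0$. Homogeneity and the triangle inequality are inherited termwise from the reference norm.

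The key property is submultiplicativity with respect to $S$: for any $s\in S$, the inclusion $\{ws : w\in S^n\}\subseteq S^{n+1}$ yields
\[
\|sx\|_r = \sup_{n\geq 0} r^{-n}\sup_{w\in S^n}\|wsx\|
\leq \sup_{n\geq 0} r^{-n}\sup_{w'\in S^{n+1}}\|w'x\|
= r\sup_{m\geq 1} r^{-m}\sup_{w'\in S^{m}}\|w'x\|
\leq r\|x\|_r.
\]
Hence the operator norm associated to $\|\cdot\|_r$ satisfies $\|s\|_r\leq r$ for every $s\in S$, so $\|S\|_r\leq r$. Combined with the trivial direction, letting $r\downarrow\rho(S)$ gives $\inf_{\|\cdot\|}\|S\|=\rho(S)$.

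There is no real obstacle here; the only place that requires attention is ensuring that the supremum defining $\|\cdot\|_r$ is finite, which is precisely where the assumption $r>\rho(S)$ (together with the dimension-free fact that $\rho(S)$ coincides with $\inf_n\|S^n\|^{1/n}$) is used. All other steps — verifying the norm axioms and the contraction-type estimate $\|sx\|_r\leq r\|x\|_r$ — are direct manipulations of the supremum and the set identity $S\cdot S^n\subseteq S^{n+1}$.
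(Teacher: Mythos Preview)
Your proof is correct and follows essentially the same approach as the paper. The only cosmetic difference is that the paper defines the auxiliary norm as a weighted \emph{sum} $v_r(x)=\sum_{n\ge 0}\|S^nx\|r^n$ (with $r\rho(S)<1$) rather than your weighted \emph{supremum}; the shift argument $S\cdot S^n\subseteq S^{n+1}$ and the conclusion are identical.
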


\begin{proof} Let $r>0$ with $r\rho(S)<1$ and consider the norm $v_r(x):=\sum_{n \ge 0} \|S^nx\|r^n$. Clearly $v_r(sx)\leq \frac{1}{r}v_r(x)$ for all $s \in S$. So $v_r(S)\leq r^{-1}$. Letting $r^{-1}$ tend to $\rho(S)$ yields the result.
\end{proof}

Lemma \ref{john} follows immediately by combining Lemma \ref{rotas} with the following well-known fact:

\begin{lemma}[John's ellipsoid] If $v$ is a norm on $\C^d$ and $\|\cdot\|_2$ the standard hermitian norm, then there is $g \in \GL_d(\C)$ such that for all $x \in \C^d$ $$\|gx\|_2 \leq v(x) \leq \sqrt{d} \cdot \|gx\|_2.$$ In particular if $w(x)$ is any another norm, then for some $h \in \GL_d(\C)$ $$w(hx) \leq v(x) \leq d \cdot w(hx).$$
\end{lemma}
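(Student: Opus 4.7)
The plan is to find a Hermitian ellipsoid $E = \{x : \|gx\|_2 \le 1\}$ satisfying $B_v \subset E \subset \sqrt{d}\,B_v$, where $B_v := \{x : v(x) \le 1\}$; this sandwich is exactly equivalent to the stated inequalities $\|gx\|_2 \le v(x) \le \sqrt{d}\,\|gx\|_2$. I take $E$ to be the L\"{o}wner ellipsoid of $B_v$, i.e.\ the unique minimum-volume ellipsoid in $\R^{2d}$ containing $B_v$; existence and uniqueness are classical (compactness plus strict log-concavity of the volume functional). Because $B_v$ is $\C$-balanced, uniqueness forces $E$ to also be invariant under the circle action $x \mapsto e^{i\theta}x$, and a real ellipsoid that is additionally invariant under multiplication by $i$ is exactly the unit ball of a Hermitian inner product $\langle x,y\rangle_E = \langle gx,gy\rangle$ for some $g \in \GL_d(\C)$. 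The containment $B_v \subset E$ then immediately yields the lower inequality $\|gx\|_2 \le v(x)$.

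For the upper bound $E \subset \sqrt{d}\,B_v$, I conjugate so that $E$ becomes the standard Euclidean ball $B_2$ and invoke the extremality of the L\"{o}wner ellipsoid: there are contact points $u_1,\ldots,u_m \in \partial B_v \cap \partial E$, each of unit Euclidean norm, and positive weights $c_i$ satisfying a Hermitian decomposition of identity
\begin{equation*}
\sum_i c_i\, u_i u_i^* = I_d, \qquad \sum_i c_i = d
\end{equation*}
(the second identity by taking the complex trace). The value $d$, in place of the $2d$ that the naive real L\"{o}wner/John extremality in $\R^{2d}$ would give, is the crucial Hermitian refinement: the $S^1$-invariance of $B_v$ lets one optimize directly within the cone of Hermitian positive ellipsoids and recast the extremality in terms of complex-Hermitian rank-one projectors $u_i u_i^*$, halving the total weight. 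Now for any $x \in E = B_2$, the decomposition of identity gives $x = \sum_i c_i (u_i^*x)\, u_i$, so using $v(u_i)=1$ together with the triangle inequality and Cauchy--Schwarz,
\begin{equation*}
v(x) \le \sum_i c_i\, |u_i^*x| \le \Bigl(\sum_i c_i\Bigr)^{1/2}\Bigl(\sum_i c_i\, |u_i^*x|^2\Bigr)^{1/2} = \sqrt{d}\,\|x\|_2 \le \sqrt{d},
\end{equation*}
which is exactly $E \subset \sqrt{d}\,B_v$.

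For the second statement, apply the first part to each of $v$ and $w$ to obtain $g_v,g_w \in \GL_d(\C)$ with $\|g_v x\|_2 \le v(x) \le \sqrt{d}\,\|g_v x\|_2$ and $\|g_w x\|_2 \le w(x) \le \sqrt{d}\,\|g_w x\|_2$. Setting $h := g_w^{-1}g_v/\sqrt{d}$ makes $\|g_w hx\|_2 = \|g_v x\|_2/\sqrt{d}$, and combining the two sandwich inequalities directly yields $w(hx) \le v(x) \le d\cdot w(hx)$. The main obstacle in the whole argument is obtaining the factor $\sqrt{d}$ rather than the $\sqrt{2d}$ a naive real L\"{o}wner/John theorem would provide; this rests entirely on the Hermitian refinement of the extremality decomposition, and everything else is routine.
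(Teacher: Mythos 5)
Your proof is correct, and it follows the same overall strategy as the paper: take an extremal ellipsoid for the unit ball $B_v$, use uniqueness together with the $S^1$-invariance of $B_v$ to conclude the ellipsoid is Hermitian, and read off the two inequalities. You work with the circumscribed (L\"owner) ellipsoid while the paper uses the inscribed (John) ellipsoid; these are dual and yield the same constants, so this is only a cosmetic difference. Where your write-up genuinely improves on the paper's is the factor $\sqrt{d}$: the paper invokes ``John's ellipsoid theorem'' with the containment $K\subset\sqrt{d}\,E$ and then applies it inside $\C^d=\R^{2d}$, but the real John/L\"owner theorem for a symmetric body in $\R^{2d}$ only gives $K\subset\sqrt{2d}\,E$. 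Getting $\sqrt{d}$ really does require the Hermitian refinement you spell out --- extremality within the cone of Hermitian positive forms yields a complex decomposition of identity $\sum_i c_i u_iu_i^* = I_d$ with $\sum_i c_i = d$ (complex trace, not the real trace $2d$), and then triangle inequality plus Cauchy--Schwarz closes the bound. So your proof fills in a step the paper leaves implicit; the argument is sound as written.

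One small thing you may want to make explicit: you assert the Hermitian decomposition of identity at the L\"owner contact points rather than deriving it. It is a known complex analogue of the John/Ball contact-point characterization, but since it is the crux of the $\sqrt{d}$ versus $\sqrt{2d}$ distinction, a reader would benefit from at least a reference, or a one-line indication of how the Lagrange/KKT condition for $\det$-maximization over Hermitian positive forms subject to $\|u\|_E\ge v(u)$ produces $I_d$ as a nonnegative combination of the $u_iu_i^*$ with $\|u_i\|_2=v(u_i)=1$.
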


\begin{proof} According to John's ellipsoid theorem (e.g. \cite{ball}) every symmetric convex body $K$ in $\R^k$ contains a unique ellipsoid $E$ of maximal volume and $E$ moreover satisfies $K \subset \sqrt{d}E$. If $K$ is the ball of radius $1$ of the complex norm $v$ in $\C^d=\R^{2d}$, then the uniqueness implies that the norm associated to $E$ is hermitian, hence of the form $\|gx\|_2$ for some $g \in \GL_d(\C)$.  
\end{proof}

\noindent \emph{Remark.} This argument shows that the constant $d$ in Lemma \ref{john} can be replaced by $\sqrt{d}$ if the norm is $\|\cdot\|_2$. In fact a more subtle argument (see e.g. \cite{blondel-nesterov}) shows that it can be replaced with $\sqrt{\min\{k,d\}}$ in case  $S$ has $k$ elements. \\

One says that $S$ is \emph{irreducible} if it does not preserve a non-trivial proper subspace of $\C^d$. It is said to be \emph{product bounded} if the semigroup it generates $T:=\bigcup_{n \ge 1} S^n$ is bounded. The following is  also classical (see \cite{barabanov, berger-wang, elsner, wirth}):

\begin{lemma}[Extremal norms] \label{pded} Suppose $S$ is irreducible. Then $\rho(S)>0$, $S/\rho(S)$ is product bounded and the infimum in $(\ref{rs})$ is attained.
\end{lemma}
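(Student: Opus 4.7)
The plan is to first establish $\rho(S)>0$ via a nil-to-nilpotent argument, then to construct the extremal norm as a Hausdorff limit of suitably normalised Rota--Strang norms; the product boundedness of $S/\rho(S)$ will fall out as a byproduct. The main obstacle will be preventing the approximating norms from degenerating as $\eps\to0^+$, and I plan to resolve this by exploiting irreducibility through Burnside's theorem.

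\emph{Positivity of $\rho$.} If $\rho(S)=0$, then for any $t\in S^k$, the Gelfand formula gives $\Lambda(t)=\lim_m\|t^m\|^{1/m}\le\lim_m\|S^{km}\|^{1/m}=\rho(S)^k=0$, so every element of the semigroup $\bigcup_n S^n$ is nilpotent. The classical nil-to-nilpotent principle (Engel/Levitzki) asserts that the associative algebra generated by these nilpotents is itself nilpotent, so $S^N=0$ for some $N\le d$; in particular $\bigcap_{s\in S}\ker s$ is non-zero and $S$-invariant, yielding a proper $S$-invariant subspace and contradicting irreducibility. Henceforth I normalise $\rho(S)=1$.

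\emph{Uniform control of the approximating norms.} Lemma~\ref{rotas} provides, for each $\eps\in(0,1]$, a norm $v_\eps\ge\|\cdot\|$ with $\|S\|_{v_\eps}\le1+\eps$. By Burnside, $\C[S]=M_d(\C)$; moreover the ascending chain $V_k:=\Span(\{I\}\cup S\cup\cdots\cup S^k)$ stabilises at $V_N=M_d(\C)$ for some $N\le d^2$ by dimension count. I fix once for all $M_1,\dots,M_k\in\{I\}\cup S\cup\cdots\cup S^N$ spanning $M_d(\C)$. For any non-zero $y$, surjectivity of $M\mapsto My\colon M_d(\C)\to\C^d$ forces $\{M_iy\}_i$ to span $\C^d$, and compactness of the unit sphere then supplies a constant $C_0$ (independent of $\eps,x,y$) such that every unit $x$ admits a decomposition $x=\sum_i\alpha_iM_iy$ with $\sum_i|\alpha_i|\le C_0$ whenever $\|y\|=1$. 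Since $\|M_i\|_{v_\eps}\le 2^N$ for $\eps\le1$, this yields
\[
v_\eps(x)\le C_0\,2^N\,v_\eps(y)\qquad(\|x\|=\|y\|=1),
\]
so that $\tilde v_\eps:=v_\eps/\min_{\|y\|=1}v_\eps(y)$ satisfies $\|x\|\le\tilde v_\eps(x)\le C\|x\|$ uniformly in $\eps$, where $C:=C_0\,2^N$, while $\|S\|_{\tilde v_\eps}\le1+\eps$ still holds.

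\emph{Passage to the limit.} The unit balls of the $\tilde v_\eps$ are now symmetric convex bodies sandwiched between two fixed Euclidean balls, so Blaschke's selection theorem extracts a Hausdorff-convergent subsequence $\tilde v_{\eps_n}\to v_\infty$, the limit being a genuine norm equivalent to $\|\cdot\|$. Hausdorff convergence of unit balls implies uniform convergence of the associated norms, hence of induced operator norms, on bounded sets, so $\|S\|_{v_\infty}\le 1$; the reverse inequality $\|S\|_{v_\infty}\ge\rho(S)=1$ is Rota--Strang. Thus $\|S\|_{v_\infty}=\rho(S)$ and the infimum in \eqref{rs} is attained. Finally, $\|t\|_{v_\infty}\le1$ for every $t\in\bigcup_n S^n$, and the equivalence $v_\infty\asymp\|\cdot\|$ yields $\|t\|\le C$ uniformly, proving product boundedness of $S/\rho(S)$.
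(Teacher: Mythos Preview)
Your argument is correct, but it takes a noticeably longer path than the paper's. The paper reverses the order of your last two steps: it first proves product boundedness directly, and only then writes down the extremal norm in one line. Concretely, after Burnside gives $\C[S]=M_d(\C)$, the paper expresses each elementary matrix $E_{ij}$ as a fixed linear combination of elements of $T=\bigcup_{n\ge1}S^n$; since $\rho(S)=1$ forces $|\trace(t)|\le d$ for all $t\in T$, the quantities $|\trace(tE_{ij})|$ are bounded uniformly in $t$, which already says $T$ is bounded. The extremal norm is then simply $v(x):=\sup_{t\in T}\|tx\|$, with no limiting procedure needed. Your Burnside step (bounding the ratio $v_\eps(x)/v_\eps(y)$ via a spanning family $M_1,\dots,M_k$) is morally the same mechanism as the paper's trace argument, but you feed it into a Blaschke selection instead of reading off boundedness of $T$ directly. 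For the positivity of $\rho(S)$, the paper also avoids Levitzki: the same Burnside decomposition forces some element of $T$ to have nonzero trace, hence nonzero spectral radius. So the paper's proof is shorter and avoids both the nil-to-nilpotent theorem and the Hausdorff-compactness machinery; on the other hand, your route is more ``soft'' and would adapt more readily to settings where an explicit formula like $v(x)=\|Tx\|$ is not available a priori.
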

Norms realising the infimum in $(\ref{rs})$ are called \emph{extremal norms}.

\begin{proof}By Burnside's theorem the subalgebra $\C[S]$ generated by $S$ is all of $M_d(\C)$. Since $\C[S]$ is linearly spanned by $S \cup \ldots \cup S^{d^2}$ we may express each element of the canonical basis $E_{ij}$ of $M_d(\C)$ as a linear combination of elements from $T$. Given that $\trace(E_{ii})=1$, this means that at least one element of $T$ has non-zero trace, which clearly forces $\rho(S)>0$. Rescaling, we may  assume without loss of generality that $\rho(S)=1$. In particular $|\trace(t)|\leq d$ for all $t \in T$ and thus $|\trace(tE_{ij})|$ is bounded independently of $t \in T$, which means that $T$ is bounded. Finally, given any norm $\|\cdot\|$ on $\C^d$ and setting $v(x):=\|Tx\|$, we get a well-defined norm such that $v(sx)\leq v(x)$ for all $s \in S$. Hence $v$ is an extremal norm.
\end{proof}

The example of a single non-trivial unipotent matrix shows that the infimum in $(\ref{rs})$ is not attained in general. If $S$ is not irreducible, it can be put in block triangular form in some basis of $\C^d$. Therefore the following is an immediate consequence of the previous lemma (recall that an algebra $N$ is nilpotent if there is an integer $n$ such that $N^n=0$).

\begin{corollary} Let $S$ be a bounded subset of $M_d(\C)$. Then $\rho(S)=0$ if and only if $\C[S]$ is a nilpotent subalgebra of $M_d(\C)$.
\end{corollary}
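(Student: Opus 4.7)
The forward direction is immediate: if $\C[S]^N=0$ for some $N$, then every $N$-fold product of elements of $S$ lies in $\C[S]^N$ and hence vanishes, so $S^N=\{0\}$ and $\rho(S)\le\|S^N\|^{1/N}=0$.

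For the converse I would proceed by induction on the dimension $d$. The base case $d=1$ is trivial: $\rho(S)=0$ forces $S=\{0\}$. For the induction step, let $S\subset M_d(\C)$ be bounded with $\rho(S)=0$, and assume the equivalence in all dimensions strictly smaller than $d$. Lemma \ref{pded} tells us that any irreducible bounded set has strictly positive joint spectral radius, so $S$ cannot be irreducible: it preserves a nonzero proper subspace $V\subset \C^d$. In a basis adapted to $V$, every $s\in S$ has block upper-triangular form $\begin{pmatrix}s_1 & * \\ 0 & s_2\end{pmatrix}$, producing bounded subsets $S_1\subset M_{d_1}(\C)$ on $V$ and $S_2\subset M_{d_2}(\C)$ on $\C^d/V$ with $d_1,d_2<d$. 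Equipping $V$ with the restriction of a chosen norm on $\C^d$ and $\C^d/V$ with the associated quotient norm, one has $\|S_i^n\|\le\|S^n\|$ for every $n$, and consequently $\rho(S_i)\le\rho(S)=0$.

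By the induction hypothesis, $\C[S_i]$ is nilpotent, and the forward direction applied within each block then yields integers $N_1,N_2$ with $S_i^{N_i}=0$. Setting $N=\max(N_1,N_2)$, every element of $S^N$ has both diagonal blocks zero, i.e.\ is of the form $\begin{pmatrix}0 & * \\ 0 & 0\end{pmatrix}$; the product of any two such matrices vanishes, so $S^{2N}=\{0\}$. In particular $\C[S]^{2N}=0$, so $\C[S]$ is nilpotent, closing the induction.

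The only step that is not completely formal is the monotonicity $\rho(S_i)\le\rho(S)$ under restriction to an invariant subspace and passage to the quotient; this reduces to the elementary observation that, once compatible norms are chosen on $V$, $\C^d/V$ and $\C^d$, the operator norm of either diagonal block of a block upper-triangular matrix is dominated by the operator norm of the full matrix, a bound that then persists after taking powers. Everything else is a routine dimensional induction combined with the contrapositive of Lemma \ref{pded}.
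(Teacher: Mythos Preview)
Your proof is correct and follows exactly the route the paper has in mind: the paper merely says the corollary is ``an immediate consequence'' of Lemma \ref{pded} together with block triangularisation, and you have written out precisely that argument---contrapositive of Lemma \ref{pded}, then induction on $d$ via an invariant subspace. The only detail one might add is a word on why $S^{2N}=\{0\}$ forces $\C[S]^{2N}=0$ (namely $\C[S]^{2N}$ is spanned by $\bigcup_{m\ge 2N}S^m$), but this is routine.
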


If $S$ is irreducible and $\rho(S)=1$, $T$ is bounded and we may define 
\begin{equation}\label{tintynorm} v(x):=\limsup_{n \to +\infty}\|S^nx\|.\end{equation}
Then $v$ is a norm, because $v(x)=0$ for some $x \neq 0$ implies $v(S^nx)=0$ for all $n$, which implies by irreducibility that $v$ is identically zero, and hence that $\rho(S)=0$. In particular:

\begin{lemma}[Barabanov norms] \label{bar} Let $S$ be an irreducible bounded subset of $M_d(\C)$, then there is a complex norm $v$ on $\C^d$ such that for all $x \in \C^d$, \begin{equation}\label{bnorm} \max_{s \in S} v(sx)=\rho(S) \cdot v(x).\end{equation}
\end{lemma}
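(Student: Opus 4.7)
The plan is to use as Barabanov norm the candidate $v$ already introduced in equation~(\ref{tintynorm}) just before the lemma, namely $v(x) := \limsup_{n \to \infty} \|S^n x\|$ (after rescaling so that $\rho(S) = 1$). The preceding paragraph shows, using irreducibility and the boundedness of $T = \bigcup_{n \geq 1} S^n$ from Lemma~\ref{pded}, that $v$ is a genuine complex norm on $\C^d$. What remains is to verify the eigenvector-type identity $\max_{s \in S} v(sx) = v(x)$.

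A preliminary reduction: replace $S$ by its closure $\overline{S}$. Since matrix multiplication is continuous one has $\overline{S}^n = \overline{S^n}$, so $\rho(\overline{S}) = \rho(S)$, and since the condition of preserving a proper subspace is closed, $\overline{S}$ is still irreducible. Hence we may assume $S$ is compact. Set $C := \sup_{t \in T} \|t\| < \infty$, which is finite by Lemma~\ref{pded}.

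The upper bound $v(sx) \leq v(x)$ for every $s \in S$ is immediate: $S^n s \subseteq S^{n+1}$ yields $\|S^n sx\| \leq \|S^{n+1} x\|$, and passing to $\limsup$ gives $v(sx) \leq v(x)$. The real task is the matching lower bound: I need some $s^* \in S$ with $v(s^* x) \geq v(x)$ (so that the max is achieved with value exactly $v(x)$). I would pick a subsequence $n_k \to \infty$ with $\|S^{n_k+1} x\| \to v(x)$, use compactness of $S$ to decompose $\|S^{n_k+1} x\| = \max_{s \in S} \|S^{n_k} sx\| = \|S^{n_k} s_k x\|$ for some $s_k \in S$, then extract a further subsequence with $s_k \to s^* \in S$.

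The delicate step, and the main obstacle, is that one cannot in general interchange the $\limsup$ over $n$ with the $\max$ over $s$: even for continuous $f_n$ on a compact set, $\limsup_n \max_s f_n(s)$ need not equal $\max_s \limsup_n f_n(s)$. Here the exchange is saved by a uniform Lipschitz estimate coming from the boundedness of $T$. For every $t \in S^{n_k}$,
$\bigl|\|ts_k x\| - \|ts^* x\|\bigr| \leq \|t\| \cdot \|s_k x - s^* x\| \leq C \|s_k x - s^* x\|$,
and taking the sup over $t \in S^{n_k}$ gives $\bigl|\|S^{n_k} s_k x\| - \|S^{n_k} s^* x\|\bigr| \to 0$, so $\|S^{n_k} s^* x\| \to v(x)$, whence $v(s^* x) \geq v(x)$. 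Combined with the easy upper bound, this gives $v(s^* x) = v(x) = \rho(S) v(x)$, and since $\sup_{s \in S} v(sx) \leq v(x) = v(s^* x)$, the supremum is attained at $s^*$ and the required identity $\max_{s \in S} v(sx) = \rho(S) v(x)$ holds for every $x$.
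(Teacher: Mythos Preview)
Your proof is correct and takes the same approach as the paper: define $v$ by formula~(\ref{tintynorm}) after rescaling to $\rho(S)=1$. The paper's own proof is a one-line reference to that formula and leaves the verification of the identity~(\ref{bnorm}) to the reader; you have supplied that verification in full, including the reduction to compact $S$ and the uniform Lipschitz estimate (via the bound $C=\sup_{t\in T}\|t\|$ from Lemma~\ref{pded}) needed to pass from $\|S^{n_k}s_kx\|\to v(x)$ to $\|S^{n_k}s^*x\|\to v(x)$.
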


\begin{proof}Indeed we may define $v$ as in $(\ref{tintynorm})$ for $S$ replaced by $S/\rho(S)$. \end{proof} A norm satisfying $(\ref{bnorm})$ is a special kind of extremal norm called a \emph{Barabanov norm} (see \cite{barabanov, wirth, protasov, kozyakin}). Such norms are not unique in general (e.g. in Example \ref{exd} 4. below any norm $\|\cdot\|$ on $\C^d$ with $\eps \|x\|_2\leq \|x\|\leq \|x\|_2$ is a Barabanov norm for $S$), but they can be in some situations \cite{morris-unique}.

Another object is naturally associated to $S$ when $\rho(S)=1$, it is the \emph{attractor semigroup}   \cite{barabanov, wirth}
$$T_\infty:=\bigcap_{n\ge 1} \overline{S^nT}.$$
In other words this is the set of limit points of finite products $s_1\cdot \ldots \cdot s_n$ whose length $n$ tends to infinity. It is clearly compact and contains elements of norm at least $1$ for any operator norm. Indeed otherwise we would have $\|S^n\|<1$ for some $n$ and thus $\rho(S^n)<1$, which is impossible as $\rho(S^n)=\rho(S)^n=1$. By construction, the Barabanov norm $(\ref{tintynorm})$ is also equal to $v(x)=\max_{t \in T_\infty} \|tx\|$. Furthermore it is straightforward that $T_\infty=T_\infty S=S T_\infty$ and $T_\infty^2=T_\infty$, and that:

\begin{lemma} Suppose $S$ is irreducible with $\rho(S)=1$. Then $T_\infty$ is also irreducible and $\rho(T_\infty)=1$.
\end{lemma}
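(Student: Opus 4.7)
The plan is to deduce both statements from the structural properties of $T_\infty$ collected in the paragraphs just preceding the lemma: $T_\infty$ is a nonempty compact subsemigroup of $M_d(\C)$, satisfies $\|T_\infty\|\ge 1$ for any operator norm, and obeys the absorption identities $ST_\infty=T_\infty=T_\infty S$ and $T_\infty^2=T_\infty$. Given these, the two assertions separate cleanly and neither requires a new idea.

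The equality $\rho(T_\infty)=1$ is a one-line computation. Since $T_\infty^n=T_\infty$ for every $n\ge 1$, one has $\|T_\infty^n\|^{1/n}=\|T_\infty\|^{1/n}$, where $\|T_\infty\|$ is finite (because $T_\infty\subseteq T$ and $T$ is bounded by Lemma~\ref{pded}) and at least $1$ (by the observation recalled just above the lemma). Letting $n\to\infty$ gives $\rho(T_\infty)=1$.

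For irreducibility, I would argue by contradiction: suppose $0\neq U\subsetneq\C^d$ is $T_\infty$-invariant, and play the $S$- and $T_\infty$-invariances against each other in two steps. First, set $V:=\sum_{k\ge 0} S^k U$. Then $V$ is $S$-invariant and nonzero, and the absorption $T_\infty S\subseteq T_\infty$ (iterated) gives $T_\infty V=\sum_k T_\infty S^k U\subseteq T_\infty U\subseteq U\subseteq V$, so $V$ is also $T_\infty$-invariant. Irreducibility of $S$ forces $V=\C^d$, hence $T_\infty\cdot\C^d=T_\infty V\subseteq U$. Second, set $W:=\Span(T_\infty\cdot\C^d)$. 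The other absorption $ST_\infty\subseteq T_\infty$ makes $W$ an $S$-invariant subspace, and $W\neq 0$ because $T_\infty$ contains an element of norm $\ge 1$, hence a nonzero matrix with nonzero image. Irreducibility of $S$ then gives $W=\C^d$, contradicting $W\subseteq U\subsetneq\C^d$.

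The only point that might require care, and that I view as the main (minor) obstacle, is justifying the two absorption identities the paper calls "straightforward" but on which the argument rests. One checks $ST_\infty\subseteq T_\infty$ by writing $t\in T_\infty\subseteq\overline{S^n T}$ as a limit of products from $S^n T$ and multiplying on the left by $s\in S$ to place the limit in $\overline{S^{n+1}T}\subseteq\overline{S^n T}$ for every $n\ge 1$ (and in $\overline T$ for $n=0$, using $ST\subseteq T$). Symmetrically, $T_\infty S\subseteq T_\infty$ follows from $Ts\subseteq T$. Once these are in hand the two steps above go through without further work.
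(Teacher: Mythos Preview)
Your proof is correct and follows essentially the same route as the paper's. The paper argues directly that $\langle T_\infty\rangle x\supseteq\langle T_\infty\rangle\C^d$ for every nonzero $x$ (via $T_\infty S\subseteq T_\infty$ and irreducibility of $S$) and then that $\langle T_\infty\rangle\C^d$ is $0$ or $\C^d$; your contrapositive version unwinds the same two absorption identities in the same order, and in fact makes the second step (the $S$-invariance of $\Span(T_\infty\C^d)$ via $ST_\infty\subseteq T_\infty$) more explicit than the paper's terse ``so this must be $0$ or $\C^d$''. The computation of $\rho(T_\infty)$ is likewise identical in substance.
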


\begin{proof} For every non-zero $x \in \C^d$ the linear span $\langle T_\infty \rangle x$ contains $\langle T_\infty \rangle S^k x$ for each $k$, and hence $\langle T_\infty \rangle \C^d$ by irreducibility of $S$. So this must be $0$ or $\C^d$. The former is impossible, because $T_\infty  \neq \{0\}$ by the above discussion.  So $T_\infty$ is irreducible. Finally by construction $v(T_\infty)=1$ and $T_\infty^k=T_\infty$ for every $k$. Hence $\rho(T_\infty)=1$.
\end{proof}

We are now in a position to prove Theorem \ref{el}.

\begin{lemma}[Existence of an idempotent]\label{idem} Suppose $S$ is a bounded irreducible subset of $M_d(\C)$ with $\rho(S)=1$. Then the attractor semigroup $T_\infty$ contains a non-zero idempotent.
\end{lemma}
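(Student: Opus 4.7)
The plan is to first produce a single element $p_\infty \in T_\infty$ having $1$ as an eigenvalue with a non-zero eigenvector, via an iterated Barabanov-type construction, and then to extract the desired idempotent as a subsequential limit of the powers of $p_\infty$.

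By Lemma \ref{bar} there is a Barabanov norm $v$ on $\C^d$ with $\max_{s \in S} v(sx) = v(x)$ for every $x$. I pick $x$ with $v(x) = 1$ and, inductively using the Barabanov equation, select $s_1, s_2, \ldots \in S$ such that $v(y_n) = 1$ for all $n$, where $y_n := s_n s_{n-1} \cdots s_1 x$. Since $(y_n)$ lies on the compact $v$-unit sphere, I extract a subsequence $(n_i)_{i\geq 1}$ with $y_{n_i} \to y_\infty$ for some $y_\infty$ with $v(y_\infty) = 1$ and, by thinning further if needed, with $n_{i+1} - n_i \to \infty$. Setting $p_i := s_{n_{i+1}} s_{n_{i+1}-1} \cdots s_{n_i+1}$, one has $p_i y_{n_i} = y_{n_{i+1}}$ and $v(p_i) \leq 1$, so $(p_i)$ is bounded and admits a subsequential limit $p_\infty$. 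For every fixed $m$, as soon as $n_{i+1} - n_i \geq m+1$ we can factor $p_i = q_i r_i$ with $q_i \in S^m$ and $r_i \in T$, so $p_i \in S^m \cdot T$; passing to the limit gives $p_\infty \in \overline{S^m T}$, and hence $p_\infty \in T_\infty$. Taking the limit of $p_i y_{n_i} = y_{n_{i+1}}$ yields $p_\infty y_\infty = y_\infty$, so $p_\infty$ has $1$ as an eigenvalue with non-zero eigenvector $y_\infty$.

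Because $p_\infty \in T_\infty$ and $T_\infty$ is bounded, $(p_\infty^n)_{n\geq 1}$ is bounded, which forces every eigenvalue of $p_\infty$ to lie in the closed unit disk with every Jordan block corresponding to a unit-modulus eigenvalue being of size one. Decompose $\C^d = V_1 \oplus V_{<1}$ into the sum of generalized eigenspaces for eigenvalues of modulus one and of modulus strictly less than one respectively. Then $p_\infty|_{V_1}$ is diagonalizable with spectrum on the unit circle, hence generates a relatively compact subgroup of $\GL(V_1)$, while $\|(p_\infty|_{V_{<1}})^n\| \to 0$. A pigeonhole in the compact closure of $\{(p_\infty|_{V_1})^n : n\ge 1\}$ produces a subsequence $n_k \to \infty$ with $(p_\infty|_{V_1})^{n_k} \to I_{V_1}$, so $p_\infty^{n_k} \to \Pi$, the projection onto $V_1$ along $V_{<1}$. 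Since $y_\infty \in V_1$ the idempotent $\Pi$ is non-zero, and $\Pi \in T_\infty$ because $T_\infty$ is closed and each $p_\infty^{n_k}$ lies in $T_\infty$. The delicate point is ensuring $p_\infty \in T_\infty$ rather than merely in $\overline{T}$, which is precisely why the condition $n_{i+1}-n_i \to \infty$ was arranged during the diagonal extraction.
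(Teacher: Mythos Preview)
Your proof is correct. Both your argument and the paper's share the same two-step structure: first produce an element of $T_\infty$ with eigenvalue $1$, then extract an idempotent as a limit of its powers using the Jordan form and power-boundedness (the second step is essentially identical in both). For the first step, however, the paper takes a slightly different route: rather than running a Barabanov orbit $(y_n)$ in $\C^d$ and arranging $n_{i+1}-n_i\to\infty$ so that the incremental products $p_i$ limit into $T_\infty$, the paper works directly inside $T_\infty$ from the start. It lets $K\subset T_\infty$ be the set of elements of operator norm $1$, observes that $K\subset K^2$ (a consequence of $T_\infty^2=T_\infty$), and recursively factors $t_0=t_ns_n\cdots s_1$ with all factors in $K$; compactness then yields $u,k\in K$ with $uk=k$, hence a unit vector $y=kx$ fixed by $u$. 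Your approach has the virtue of reusing the Barabanov-orbit construction already employed in the proof of Theorem~\ref{bg-el}, and your observation that $n_{i+1}-n_i\to\infty$ is exactly what forces $p_\infty\in T_\infty$ is the crux of the matter. The paper's approach is more intrinsic to $T_\infty$ and sidesteps any reliance on the Barabanov maximum actually being attained for a merely bounded $S$.
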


\begin{proof}Let $K$ be the subset of $T_\infty$ made of elements with operator norm $1$. We have already seen that $K$ is non-empty. If $ab$ has norm one and $a,b \in T_\infty$, then both $a$ and $b$ have norm one. So $K \subset K^2$. Starting from some $t_0 \in K$ we may write $t_0=t_1s_1$ with $t_1,s_1 \in K$, and then similarly $t_1=t_2s_2$, etc. For each $n$ we have $t_0=t_ns_n\cdot \ldots \cdot s_1$. By compactness of $K$ there is a subsequence $n_i$ such that $s_{n_i}\cdot \ldots \cdot s_1$ converges, say towards $k \in K$. Passing to a further subsequence we may assume that $s_{n_{i+1}} \cdot \ldots \cdot s_{n_i+1}$ also converges, say towards $u \in K$. At the limit we have $k=uk$.  But there is a unit vector $x$ such that $y:=kx$ has norm $1$. Hence $y=uy$ and $u$ has $1$ as an eigenvalue. So $T_\infty$ contains an element $u$ with eigenvalue $1$. Now looking at $u$ in Jordan normal form and considering large powers of $u$, we see that the Jordan blocks with eigenvalue of modulus $1$ must be of size $1$, because powers of non-trivial unipotents are unbounded. Therefore $\{u^n\}_{n \ge1}$ contains an idempotent in its closure.
\end{proof}

Note that $T_\infty$ may contain $0$, so the lemma does not follow from a general result  guaranteeing the existence of idempotents in compact semigroups such as the Ellis-Numakura lemma.

\begin{proof}[Proof of Theorem \ref{el}] We first assume that $S$ is irreducible. Rescaling, we may assume that $\rho(S)=1$. By Lemma \ref{idem} $T_\infty$ contains an idempotent. In particular $\Lambda(T_\infty)=1$, which implies what we want. The general case follows from the irreducible one. Indeed if $S$ is not irreducible it can be put in block triangular form, and if $S_{ii}$ denotes the $i$-th diagonal block, then it is straightforward to check (either from the definition, or more directly from Theorem \ref{bg}) that $\rho(S)=\max_i \rho(S_{ii})$. 
\end{proof}

\begin{example}\label{exd} The following are examples of irreducible subsets of $M_d(\C)$ with joint spectral radius equal to $1$. \begin{enumerate} \item $S=\{E_{ij}\}_{ij}$ the elementary matrices in $M_d(\C)$. Note that $S$ is made of rank $1$ elements and $T_\infty=S \cup \{0\}$.
\item $S=\{E_{i, i+1}\}_{1\leq i<d} \cup \{E_{d1}\}$. Note that $T=T_\infty=\{0\}\cup\{E_{ij}\}_{ij}$.
\item $S=U_d(\C) \cup \{t\}$, where $U_d(\C)$ is the group of unitary matrices and $t=\diag(\alpha_1,\ldots,\alpha_d)$ with $|\alpha_i|<1$. Then $T_\infty=T \cup \{0\}$. 
\item $S=\{\id\} \cup \eps U_d(\C)$ for $\eps<1$. Then $T_\infty=T\cup \{0\}$.
\end{enumerate}
\end{example}

\section{Explicit bounds for Theorem \ref{el}}
In this section we prove Theorem \ref{bg-el}. We need a basic lemma.

\begin{lemma}\label{reg} Let $\|\cdot\|$ be a norm on $\C^d$. Let $A \in M_d(\C)$ and $x \in \C^d$ with $\|A\|\leq 1$ and $\|x\|=1$. Let $\eps>0$ and $\lambda \in \C$ with $|\lambda|\leq 2$. Assume that $\|Ax-\lambda x\|\leq (\eps |\lambda|)^d$. Then the spectral radius $\Lambda(A)$ of $A$ satisfies $\Lambda(A) \ge |\lambda|(1-4\eps)$.
\end{lemma}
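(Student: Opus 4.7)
The plan is to exploit Cayley--Hamilton: since the characteristic polynomial $p(t) = \det(tI - A) = \sum_{k=0}^d a_k t^k = \prod_i(t-\mu_i)$ annihilates $A$, writing $p(A)x = 0$ and splitting off the scalar $p(\lambda) x$ from the remainder translates the smallness of $y := Ax - \lambda x$ into smallness of $|p(\lambda)|$, which in turn forces $\lambda$ to be close to some eigenvalue of $A$.

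First I dispose of the trivial cases: if $\eps \ge 1/4$, or $\lambda = 0$, or $\Lambda(A) \ge |\lambda|$, the conclusion is automatic; so I assume $0 < \Lambda(A) < |\lambda| \le 2$ and $\eps < 1/4$. Let $\delta := \|y\| \le (\eps|\lambda|)^d$. The identity $A^k - \lambda^k I = \bigl(\sum_{j=0}^{k-1} \lambda^{k-1-j} A^j\bigr)(A - \lambda I)$ together with $\|A\| \le 1$ gives $\|(A^k - \lambda^k I) x\| \le k M^{k-1} \delta$ where $M := \max(|\lambda|, 1) \le 2$. Combined with the Newton bound $|a_k| \le \binom{d}{k} \Lambda(A)^{d-k}$ and the binomial identity $\sum_k k\binom{d}{k} w^{d-k} u^{k-1} = d(w+u)^{d-1}$, expanding
$$0 = p(A) x = p(\lambda) x + \sum_{k=1}^d a_k (A^k - \lambda^k I) x$$
yields $|p(\lambda)| \le d(\Lambda(A) + M)^{d-1} \delta \le d \cdot 3^{d-1} \delta$, using $\Lambda(A) \le \|A\| \le 1$. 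On the other hand, factoring gives the matching lower bound $|p(\lambda)| = \prod_i|\lambda - \mu_i| \ge (|\lambda| - \Lambda(A))^d$ (valid because $|\lambda| > \Lambda(A)$). Combining and taking $d$-th roots produces
$$|\lambda| - \Lambda(A) \le 3(d/3)^{1/d} \cdot \eps|\lambda|.$$
A short calculus check shows that $d \mapsto 3(d/3)^{1/d}$ attains its maximum near $d = 3e$ at value roughly $3.39 < 4$, for all integer $d \ge 1$; hence $\Lambda(A) \ge |\lambda|(1 - 4\eps)$.

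The main subtlety is achieving the constant $4$ rather than something larger. A naive approach --- Taylor expanding $p(t)$ at $\lambda$ and using $|p^{(k)}(\lambda)/k!| \le \binom{d}{k}(1+|\lambda|)^{d-k}$ --- yields only $|\lambda|-\Lambda(A) \le 6\eps|\lambda|$, which is too weak. The improvement comes from expanding at $0$ instead, where the tighter estimate $|a_k| \le \binom{d}{k}\Lambda(A)^{d-k}$ lets the binomial identity collapse the sum to $d(\Lambda(A)+M)^{d-1}$, saving just enough to land below $4\eps$.
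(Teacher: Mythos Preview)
Your proof is correct and follows essentially the same route as the paper: use the telescoping identity $A^k-\lambda^k I=\bigl(\sum_{j=0}^{k-1}\lambda^{k-1-j}A^j\bigr)(A-\lambda I)$ to bound $\|A^kx-\lambda^kx\|$, plug into Cayley--Hamilton $\chi_A(A)=0$ to bound $|\chi_A(\lambda)|$ from above, and combine with the lower bound $|\chi_A(\lambda)|=\prod_i|\lambda-\mu_i|\ge(|\lambda|-\Lambda(A))^d$.

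The only difference is bookkeeping. The paper bounds more crudely: $1+|\lambda|+\cdots+|\lambda|^{k-1}\le 2^d$ and $|a_{d-k}|\le\binom{d}{k}$ (using only $|\mu_i|\le\|A\|\le 1$), which gives $|\chi_A(\lambda)|\le 2^d(2\eps|\lambda|)^d=(4\eps|\lambda|)^d$ and lands exactly on the constant $4$ with no calculus needed. Your route --- keeping the factor $kM^{k-1}$, using the sharper $|a_k|\le\binom{d}{k}\Lambda(A)^{d-k}$, and collapsing via the derivative identity --- is more refined and yields the slightly better $3(d/3)^{1/d}\approx 3.4$, which you then round up. So your closing paragraph is a bit misleading: the ``naive'' approach you warn against (Taylor expanding at $\lambda$) is not what the paper does, and the paper's simpler bounds already achieve the constant $4$ without the extra care you take.
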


\begin{proof}Writing $A^kx-\lambda^kx = A^{k-1}(Ax-\lambda x) + \ldots + \lambda^{k-1}(Ax-\lambda x)$ and using that $\|A\|\leq 1$ we obtain for $k \le d$ $$\|A^kx-\lambda^kx\|\leq (\eps|\lambda|)^d (1+|\lambda|+\ldots+|\lambda|^{k-1})\leq  (2\eps|\lambda|)^d.$$ If $\chi_A(t)=t^d+a_{d-1}t^{d-1}+\ldots+a_0$ is the characteristic polynomial of $A$, then $\|\chi_A(A)x-\chi_A(\lambda)x\|\leq \sum |a_k|\|A^k x - \lambda^k x\|$, and $\chi_A(A)=0$ by Cayley-Hamilton. But $|a_{d-k}|\leq {d \choose k}$, and the result then follows from $$|\Lambda(A)-|\lambda||^d \leq |\chi_A(\lambda)|\leq 2^d  (2\eps|\lambda|)^d.$$
\end{proof}

\begin{proof}[Proof of Theorem \ref{bg-el}] We may put $S$ in block triangular form. Since $\rho(S)= \max_i \rho(S_{ii})$ at least one of the irreducible diagonal blocks $S_{ii}$ has $\rho(S_{ii})=1$. Hence without loss of generality we may assume that $S$ is irreducible. Let $v$ be a Barabanov norm for $S$ as in Lemma \ref{bar}. Pick a unit vector $x \in \C^d$ and find recursively $s_1,s_2,\ldots$ such that $x_n=s_n\cdot \ldots \cdot s_1 x$ satisfies $v(x_n)=1$ for all $n$. Let $\delta=(\eps/4)^d$. Note that the cardinality of a $\delta$-separated set lying in the unit ball for $v$ is at most $(1+\delta/2)^d/(\delta/2)^d=(1+\frac{2}{\delta})^d \leq n_0(d)\eps^{-d^2}$. By pigeonhole, there is $n< n'$ both smaller than this bound such that $v(x_n - x_{n'})<\eps$. Let $A=s_{n'}\cdot \ldots \cdot s_{n+1}$. In other words $v(Ax_n-x_n)<\delta$. By Lemma \ref{reg}, it follows that $\Lambda(A)\ge 1-4\delta^{1/d}.$
\end{proof}


\section{Explicit bounds for Bochi's inequalities}
In this section we prove Theorems \ref{pol-bd} and \ref{bocnewA}. We begin by the Siegel-type lemma already mentioned.

\begin{lemma}[Siegel-type lemma]\label{siegel} Let $\|\cdot\|$ be a norm on $\C^d$. Let $\eps \in (0,1)$ and $T,n \in \N$ with $(1+T)^{n} > (1+2nT\eps^{-1})^d$. Pick $x_1,\ldots,x_n$ vectors in $\C^d$ with $\|x_i\|\leq 1$. Then there are integers $c_1,\ldots,c_n$, not all zero, such that $|c_i|\leq T$ for all $i$ and $$\|\sum_1^n c_i x_i\| \leq \eps.$$
\end{lemma}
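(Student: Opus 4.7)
The proof is a textbook application of the pigeonhole/Siegel principle: we generate many integer combinations that are automatically confined to a ball, partition that ball into few small cells, and extract a collision whose difference yields the desired small combination.

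First, introduce the finite set of tuples $C = \{0,1,\ldots,T\}^n$ of cardinality $(1+T)^n$, and the evaluation map $\Phi\colon C \to \C^d$ defined by $\Phi(c) = \sum_{i=1}^n c_i x_i$. The triangle inequality together with $\|x_i\| \le 1$ and $0 \le c_i \le T$ gives
$$\|\Phi(c)\| \le \sum_{i=1}^n c_i \|x_i\| \le nT,$$
so $\Phi(C)$ is contained in the closed ball $\overline{B}(0, nT) \subset (\C^d, \|\cdot\|)$.

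Second, I would invoke the standard covering estimate for a ball in a normed space: the ball $\overline{B}(0, nT)$ can be covered by at most $(1 + 2nT/\eps)^d$ (sub)balls of radius $\eps/2$. (This is the usual estimate obtained by translating a maximal $\eps/2$-separated subset of the ball and comparing volumes; the exponent matches the value used in the hypothesis.) Call these cells $U_1, \ldots, U_M$ with $M \le (1 + 2nT\eps^{-1})^d$.

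Third, the arithmetic hypothesis $(1+T)^n > (1 + 2nT\eps^{-1})^d$ says $|C| > M$, so by pigeonhole there exist distinct tuples $c, c' \in C$ with $\Phi(c)$ and $\Phi(c')$ lying in the same cell; hence $\|\Phi(c) - \Phi(c')\| \le \eps$. Setting $a_i := c_i - c'_i \in \Z$, we have $|a_i| \le T$ for every $i$, the vector $(a_i)$ is not identically zero (as $c \neq c'$), and
$$\Bigl\|\sum_{i=1}^n a_i x_i\Bigr\| = \|\Phi(c) - \Phi(c')\| \le \eps,$$
which is exactly the conclusion.

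The only subtlety is the packing/covering step: one must be sure that the ball in the chosen norm really admits a cover of size at most $(1 + 2nT\eps^{-1})^d$; this is the standard real-dimension covering lemma applied to $(\C^d, \|\cdot\|)$, and the required exponent is exactly the one baked into the hypothesis, so no additional work is needed. Everything else is completely formal.
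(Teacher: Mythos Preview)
Your argument is essentially identical to the paper's: both enumerate the $(1+T)^n$ sums $\sum d_i x_i$ with $d_i\in\{0,\ldots,T\}$, observe they lie in the ball of radius $nT$, and force a collision by a volume pigeonhole yielding the threshold $(1+2nT\eps^{-1})^d$. The only cosmetic difference is that the paper phrases the pigeonhole as a packing argument (if the $\eps/2$-balls around these points were pairwise disjoint they would all sit inside $B(0,nT+\eps/2)$, and comparing volumes contradicts the hypothesis) rather than as a covering argument.
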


\begin{proof} Consider the sums $\sum_1^n d_i x_i$ for integers $d_i \in [0,T]$. They have norm at most $Tn$. If all $\frac{\eps}{2}$-balls around them were disjoint, then the ball of radius $Tn+\frac{\eps}{2}$ around the origin would contain at least $(1+T)^n$ disjoint balls of radius $\frac{\eps}{2}$. Comparing volumes we would have $(1+T)^n \leq (Tn+\eps/2)^d/(\eps/2)^d$, contrary to our assumption. Hence two of these balls, corresponding to $(d_i)_i$ and $(d_i')_i$, say, must intersect. Setting $c_i=d_i'-d_i$ we get what we want.
\end{proof}

\begin{lemma}\label{trace}Let $\eps>0$. Let $A \in M_d(\C)$ such that $|\trace(A^k)|\leq \eps^k$ for $k=1,\ldots,d$. Then the spectral radius of $A$ satisfies $\Lambda(A)\leq 2\eps$.
\end{lemma}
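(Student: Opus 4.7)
The plan is to convert the hypothesis on the power sums $p_k := \trace(A^k) = \sum_i \lambda_i^k$ into a bound on the coefficients of the characteristic polynomial (the elementary symmetric polynomials $e_k := e_k(\lambda_1,\ldots,\lambda_d)$), and then to extract a Cauchy-type bound on the roots $\lambda_i$. The two conversions are both essentially one-line computations, so there is no serious obstacle; the main input is Newton's identities.

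First I would invoke Newton's identity, which for $1 \leq k \leq d$ reads
$$(-1)^{k-1} k \, e_k = p_k - e_1 p_{k-1} + e_2 p_{k-2} - \cdots + (-1)^{k-2} e_{k-1} p_1,$$
and prove by induction on $k$ that $|e_k| \leq \eps^k$. The base case $|e_1|=|p_1|\leq \eps$ is immediate. For the inductive step, using $|p_j|\leq \eps^j$ together with $|e_j|\leq \eps^j$ for $j<k$, the triangle inequality gives
$$k|e_k| \leq |p_k| + \sum_{j=1}^{k-1}|e_j|\,|p_{k-j}| \leq \eps^k + (k-1)\eps^k = k\eps^k,$$
yielding $|e_k|\leq \eps^k$ as desired.

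Next I would use the characteristic polynomial $\chi_A(t)=t^d+\sum_{k=1}^d (-1)^k e_k\, t^{d-k}$. If $\lambda$ is any eigenvalue of $A$, then $\chi_A(\lambda)=0$ implies
$$|\lambda|^d \leq \sum_{k=1}^d |e_k|\,|\lambda|^{d-k} \leq \sum_{k=1}^d \eps^k |\lambda|^{d-k}.$$
Assuming $|\lambda|>\eps$ (otherwise we are done, since $\eps<2\eps$), divide by $|\lambda|^d$ and bound the geometric sum to obtain
$$1 \leq \sum_{k=1}^d \left(\frac{\eps}{|\lambda|}\right)^k \leq \frac{\eps/|\lambda|}{1-\eps/|\lambda|} = \frac{\eps}{|\lambda|-\eps},$$
which rearranges to $|\lambda|\leq 2\eps$. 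Taking $\lambda$ of maximal modulus yields $\Lambda(A)\leq 2\eps$, completing the proof. The only place where the constant $2$ enters is in the last step (a standard Cauchy/Lagrange root bound), so one cannot improve it significantly without strengthening the hypotheses on the $p_k$.
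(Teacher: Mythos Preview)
Your proof is correct and follows essentially the same route as the paper's: Newton's identities give $|e_k|\leq \eps^k$ by the same induction, and then the Cauchy-type root bound $1\leq \sum_{k=1}^d (\eps/|\lambda|)^k$ forces $|\lambda|\leq 2\eps$. The only cosmetic difference is that the paper writes the characteristic polynomial coefficients as $a_{d-k}=(-1)^k e_k$ and phrases the final inequality as ``$x\geq 1/2$'' for $x=\eps/|\lambda|$, which is exactly your rearrangement.
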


\begin{proof} Let $s_k=\lambda_1^k+\ldots+\lambda_d^k$, where $\lambda_1,\ldots,\lambda_d$ are the eigenvalues of $A$. The Newton relations read $s_k+a_{d-1}s_{k-1}+\ldots+a_{d-k+1}s_1=-ka_{d-k}$, where $t^d+a_{d-1}t^{d-1}+\ldots+a_0$ is the characteristic polynomial $\chi_A$ of $A$. We deduce from them that $|a_{d-k}|\leq \eps^k$ for each $k=1,\ldots,d$. If $\lambda$ is an eigenvalue of $A$, then $\chi_A(\lambda)=0$ and thus $$|\lambda|^d \leq \eps|\lambda|^{d-1}+\ldots+\eps^k|\lambda|^{d-k}+\ldots+\eps^d.$$ Setting $x=\eps/|\lambda|$, we obtain $1\leq x+\ldots+x^d$. But this implies $x\ge 1/2$.
\end{proof}

\begin{lemma}\label{convex} Let $n \in \N$ and $S\subset M_d(\C)$ be a bounded set such that $\eps:=\max_{k \leq nd} \Lambda(S^k)^{\frac{1}{k}} \leq 1$. Let $Q$ be the complex convex hull of $S\cup \ldots \cup S^n$. Then $\Lambda(Q)\leq 2d\eps$.
\end{lemma}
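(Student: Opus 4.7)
My plan is to reduce to Lemma \ref{trace} by bounding the traces of powers of an arbitrary element of $Q$. The input hypothesis $\Lambda(S^j)^{1/j}\leq \eps$ for all $j\leq nd$ combined with the obvious inequality $|\trace(M)|\leq d\,\Lambda(M)$ on $M_d(\C)$ should be enough to control $|\trace(A^k)|$ for $A\in Q$ and $k\leq d$, at the mild cost of an extra factor $d$.

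Concretely, fix $A\in Q$ and write $A=\sum_{i=1}^m \alpha_i q_i$ with $q_i\in S^{b_i}$ for some $1\leq b_i\leq n$, and $\sum_i |\alpha_i|=1$. Expanding,
\begin{equation*}
A^k=\sum_{i_1,\ldots,i_k} \alpha_{i_1}\cdots\alpha_{i_k}\,q_{i_1}\cdots q_{i_k},
\end{equation*}
and each product $q_{i_1}\cdots q_{i_k}$ lies in $S^{b_{i_1}+\cdots+b_{i_k}}$, where the exponent lies between $k$ and $kn$. For $k\leq d$ this exponent is at most $nd$, so the hypothesis applies and gives $\Lambda(q_{i_1}\cdots q_{i_k})\leq \eps^{b_{i_1}+\cdots+b_{i_k}}\leq \eps^k$ (using $\eps\leq 1$ and the exponent being $\geq k$). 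Taking traces and using $|\trace(M)|\leq d\,\Lambda(M)$ together with the triangle inequality and $\sum |\alpha_i|=1$, I obtain
\begin{equation*}
|\trace(A^k)| \;\leq\; d\eps^k\, \Big(\sum_i |\alpha_i|\Big)^k \;\leq\; d\eps^k \;\leq\; (d\eps)^k\qquad(1\leq k\leq d).
\end{equation*}

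At this point Lemma \ref{trace} applied with the parameter $d\eps$ in place of $\eps$ yields $\Lambda(A)\leq 2d\eps$, which is the desired conclusion since $A\in Q$ was arbitrary. There is no real obstacle here: the only slightly non-obvious point is that one must absorb the factor $d$ coming from the trace/spectral radius inequality by inflating the parameter in Lemma \ref{trace} from $\eps$ to $d\eps$, which works because $d\leq d^k$ for all $k\geq 1$. The factor $d$ in the conclusion $\Lambda(Q)\leq 2d\eps$ is precisely the cost of this step.
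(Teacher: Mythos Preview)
Your proof is correct and essentially identical to the paper's. The paper phrases the key step more compactly via the observation $Conv(A)\,Conv(B)\subset Conv(AB)$, so that $a^k$ lies in the complex convex hull of $S^k\cup\ldots\cup S^{nk}$, but this is exactly what your explicit expansion of $A^k$ establishes; the trace bound $|\trace(A^k)|\leq d\eps^k\leq(d\eps)^k$ and the appeal to Lemma~\ref{trace} are then the same in both arguments.
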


\begin{proof} Note that $Conv(A)Conv(B) \subset Conv(AB)$ for any two sets $A,B \in M_d(\C)$. So if $a \in Q$, then $a^k$ belongs to the convex hull of $S^k \cup \ldots \cup S^{nk}$. In particular
$$|\trace(a^k)| \leq d\eps^k \leq (d\eps)^k$$
for each $k=1,\ldots,d$. The conclusion now follows from Lemma \ref{trace}.
\end{proof}

We now prove Theorem \ref{pol-bd}. Rescaling and triangularizing $S$ if necessary, we may assume without loss of generality that $\rho(S)=1$ and that $S$ is irreducible. As in the proof of Theorem \ref{bg-el}  take a Barabanov norm $\|\cdot\|$ for $S$. Pick a unit vector $x \in \C^d$ and find $s_1,\ldots,s_n,\ldots$ in $S$ such that $\|x_n\|=1$ for all $n$, where $x_n:=s_n\cdot \ldots \cdot s_1x$. For $T$ and $\eps>0$ as in Lemma \ref{siegel} we obtain integers $c_i$'s not all zero such that $|c_i|\leq T$ and $\|\sum_1^n c_i x_i\| \leq \eps$. Let $i_0$ be the smallest index $i$ with $c_i \neq 0$ and set $y=x_{i_0}$. Hence we may write:
$$\|c_{i_0}y + \sum_{i>i_0} c_i s_i \cdot \ldots \cdot s_{i_0+1} y \|\leq \eps.$$ In other words:
\begin{equation}\label{Aeq}\|\lambda y +Ay\|\leq \frac{\eps}{N},\end{equation}
where $A:=\frac{1}{N} \sum_{i>i_0} -c_i s_i \cdot \ldots \cdot s_{i_0+1}$, $\lambda=\frac{c_{i_0}}{N}$ and $N:=\sum_{i>i_0} |c_i|$. Note that $N\neq 0$, because $\eps<1$ and $\|x_i\|=1$ for all $i$. Note further that $\|A\|\leq 1$ because $\|s\|\leq 1$ for all $s \in S$. And that $|\lambda|\ge \frac{1}{N} \ge \frac{1}{Tn}$.

If we can apply Lemma \ref{reg} to $A$ and $\lambda$, we will get $$\Lambda(A) \ge |\lambda|- 4(\frac{\eps}{N})^{\frac{1}{d}} \ge \frac{1}{2N} \ge \frac{1}{2nT}.$$ provided  $4(\frac{\eps}{N})^{\frac{1}{d}} \leq 1/2N$. The conditions for Lemma \ref{reg} require that $|\lambda|\leq 2$, while those for Lemma \ref{siegel} require  $(1+T)^{n} > (1+2nT\eps^{-1})^d$. These conditions will be fulfilled if we set $T=32d^2$,  $n=2d^2$ and $\eps^{-1}=8^d(nT)^{d-1}$. We conclude that 
$$\Lambda(A) \ge \frac{1}{2^7d^4}.$$
However $A$ belongs to the convex hull of $S \cup \ldots \cup S^n$. Therefore Lemma \ref{convex} implies that $$\max_{k \leq nd} \Lambda(S^k)^{\frac{1}{k}}\ge \frac{1}{2^8d^5}.$$
This yields the first inequality in Theorem \ref{pol-bd}. The second follows by applying the first to $S^m$ for $m=\lceil 8+5\log_2 d\rceil$.

\begin{proof}[Proof of Theorem \ref{bocnewA}] This is very similar.  Suppose $\|S\|=1$ and let $\delta=\|S^{n_1}\|$. Pick a unit vector $x$ and $s_1,\ldots,s_{n_1}$ such that $\|s_{n_1}\cdot \ldots \cdot s_1  x\|=\delta$. Arguing as in the above proof of Theorem \ref{pol-bd} we get a $y$ with $\|y\|\ge \delta$ such that $(\ref{Aeq})$ holds. Lemma \ref{reg} gives $\Lambda(A) \ge \frac{1}{2n_1T}$ if $\eps$ is chosen so that $ 4 (\eps \delta^{-1}/n_1T)^{1/d} = 1/2n_1T$. Then setting $n_1=2d^2$, $n_1T=M \delta^{-1}$, we see that the condition for Lemma \ref{siegel} is fulfilled if $M\ge2^6d^4$. But $\rho(S) \ge \Lambda(A) \ge \delta/2M$, proving the claim.
\end{proof}

\section{Ultrametric complete valued fields}
 In this section we consider the analogue of the above for an algebraically closed complete and non-archimedean valued field $K$ and prove Theorem \ref{ultra-thm}.

Let $\mathcal{O}:=\{x \in K, |x|\leq 1\}$ be the ring of integers,  $\mathfrak{m}:=\{x \in K, |x|<1\}$ its maximal ideal and $\overline{k}=\mathcal{O}/\mathfrak{m}$ the residue field. Recall that the value group of $K$ is dense in $\R_{>0}$ since $K$ is algebraically closed. By an ultrametric norm on $K^d$, we mean a map $\|\cdot\|:K \to \R_{\ge 0}$ such that $\|\lambda x\|=|\lambda|\|x\|$, $\|x+y\|\leq \max\{\|x\|,\|y\|\}$ and $\|x\|=0$ if and only if $x=0$, for all $x,y \in K^d$, $\lambda \in K$.

An \emph{orthogonal basis} for an ultrametric norm is a basis $(e_i)_1^d$ of $K^d$ such that $\|x\|=\max\{ c_i |x_i|\}$ for some positive reals $c_i$, if $x=x_1e_1+\ldots+x_de_d$. We say that it is orthonormal if $c_i=1$ for all $i$. If $K$ is locally compact, or just spherically complete \cite[2.4.4]{BGR}, all ultrametric norms admit an orthogonal basis, but in general we only have:

\begin{lemma}\label{eq-no} Let $v$ and $w$ be two ultrametric norms on $K^d$ and $\alpha>1$ a real.  Then there is $g \in \GL_d(K)$ such that $w(x)\leq v(gx) \leq \alpha w(x)$ for all $x \in K^d$.
\end{lemma}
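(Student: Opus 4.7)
The plan is to reduce each of $v$ and $w$ to an ``almost diagonal'' form in an appropriate basis, and then exhibit $g$ as the change of basis between these two bases, suitably dilated coordinate by coordinate.

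The main tool is the existence of near-orthogonal bases: for every $\beta > 1$ there is a basis $(e_1,\ldots,e_d)$ of $K^d$ such that
\[
\beta^{-1}\max_i |x_i|\,v(e_i) \;\leq\; v\!\left(\sum_i x_i e_i\right) \;\leq\; \max_i |x_i|\,v(e_i)
\]
for all $x_i \in K$, and similarly a basis $(f_1,\ldots,f_d)$ for $w$; the upper bound is just the ultrametric inequality. To get the lower bound, perform a Gram--Schmidt-style induction: having built $e_1,\ldots,e_{k-1}$ spanning the finite-dimensional subspace $V_{k-1}$, which is closed in $K^d$ (the inductive near-orthogonality combined with completeness of $K$ forces the coordinate functionals on $V_{k-1}$ to be continuous, so $V_{k-1}$ is complete and hence closed), pick any $y_k \in K^d \setminus V_{k-1}$ and, using that $|K^\times|$ is dense in $\R_{>0}$ because $K$ is algebraically closed and non-trivially valued, choose $u_k \in V_{k-1}$ with
\[
v(y_k - u_k) \;\leq\; \beta^{1/(d-1)} \cdot \inf_{u \in V_{k-1}} v(y_k - u);
\]
set $e_k := y_k - u_k$. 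A direct computation then gives $v(ae_k + u) \geq \beta^{-1/(d-1)} |a|\, v(e_k)$ for all $a \in K$ and $u \in V_{k-1}$, and a case analysis on whether $\max_i |a_i|\,v(e_i)$ is attained at $i = k$ or at some $i < k$, combined with the strong ultrametric inequality and the inductive hypothesis, yields $\beta^{-1}$-orthogonality of the full basis $(e_1,\ldots,e_d)$.

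Now fix any $\beta$ with $1 < \beta \leq \alpha^{1/3}$. Using density of $|K^\times|$ in $\R_{>0}$ once more, choose $\lambda_i \in K^\times$ satisfying $\beta\, w(f_i) \leq |\lambda_i|\, v(e_i) \leq \beta^2\, w(f_i)$, and define $g \in \GL_d(K)$ by $g(f_i) := \lambda_i e_i$. For $x = \sum_i y_i f_i$ set $M := \max_i |y_i|\, w(f_i)$; the near-orthogonality for $w$ gives $\beta^{-1} M \leq w(x) \leq M$, while applying near-orthogonality for $v$ to $gx = \sum_i (\lambda_i y_i)\, e_i$ together with the choice of the $\lambda_i$ gives $M \leq v(gx) \leq \beta^2 M$. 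Combining these, $w(x) \leq M \leq v(gx)$ and $v(gx) \leq \beta^2 M \leq \beta^3 w(x) \leq \alpha w(x)$, as required.

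The main obstacle is Step~1, the construction of near-orthogonal bases: because $K$ is not assumed spherically complete, the infimum defining the distance from $y_k$ to $V_{k-1}$ need not be attained, forcing a multiplicative rather than additive approximation, and the inductive step is genuinely delicate in the case where $\max_i |a_i|\,v(e_i)$ is realised simultaneously at several indices -- there the ultrametric inequality alone is insufficient to control cancellation, and one must crucially invoke the explicit lower bound $v(a_k e_k + u) \geq \beta^{-1/(d-1)} |a_k|\, v(e_k)$ inherited from the construction. Once that induction is in place, the remainder is bookkeeping with $\beta$ and a single appeal to density of the value group.
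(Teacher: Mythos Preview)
Your proof is correct and follows essentially the same approach as the paper: build $\beta$-orthogonal (``almost orthogonal'') bases for $v$ and $w$, then use density of the value group to rescale the change-of-basis map. The only difference is that the paper treats the existence of almost orthogonal bases as a black box, citing \cite[2.6.2, Prop.~3]{BGR}, whereas you supply the Gram--Schmidt-style construction in full; your inductive bookkeeping with $\gamma_k = \beta^{-1/(d-1)}\gamma_{k-1}$ is exactly the standard argument found there, and the final estimate $\beta^3 \leq \alpha$ is the same computation the paper leaves implicit.
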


\begin{proof} This is well-known and follows from the existence \cite[2.6.2 Prop. 3]{BGR} of almost orthogonal bases for ultrametric norms on $K^d$ and the density in $\R^+$ of the value group of $K$.
\end{proof}

We begin by pointing out that the Rota-Strang observation, Lemma \ref{rotas} and its proof, remain valid in the ultrametric setting. Combined with Lemma \ref{eq-no}, this yields the right hand side of $(\ref{boc-ultra})$.  It turns out that the infimum in $(\ref{rs})$ is realized under some mild conditions (milder than in the complex case):

\begin{lemma}\label{pd} Suppose that the value group of $K$ is all of $\R_{>0}$ and $S \subset M_d(K)$ is a bounded set. If $\rho(S)=0$, then $S^d=0$, while if $\rho(S)>0$, then there is an ultrametric norm $\|\cdot\|$ on $K^d$ with $\|S\|=\rho(S)$. 
\end{lemma}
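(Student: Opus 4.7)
The plan is to handle the two assertions separately, using the irreducible-block decomposition of $S$ in both.

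For the first assertion $\rho(S)=0\Rightarrow S^d=0$: I would put $S$ in block upper-triangular form with irreducible diagonal blocks $S_{ii}$ of size $d_i$, so that $\rho(S_{ii})\le\rho(S)=0$. If some $d_i\ge 2$, Burnside's theorem gives $K[S_{ii}]=M_{d_i}(K)$, so $\id$ is a $K$-linear combination of elements $t\in\bigcup_n S_{ii}^n$. But Gelfand applied to the single matrix $t$ (valid over any complete valued field) yields $\Lambda(t)\le\rho(S_{ii})=0$, so $t$ is nilpotent and $\trace(t)=0$; this forces $d_i=\trace(\id)=0$, a contradiction. Hence every block has $d_i=1$ with $S_{ii}=\{0\}$, and $S$ is strictly upper triangular, giving $S^d=0$.

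For the second assertion, I would first handle irreducible $S$ with $\rho(S)>0$. By Burnside, every elementary matrix lies in the span of $S,S^2,\ldots,S^{\ell(d)}$; write $E_{ij}=\sum_k\alpha_{ij,k}t_{ij,k}$ with $t_{ij,k}\in S^{n_{ij,k}}$. The ultrametric triangle inequality applied to the sum of eigenvalues, combined with Gelfand and the inclusion $u^m\in S^{nm}$ for $u\in S^n$, gives $|\trace(u)|\le\Lambda(u)\le\rho(S)^n$; applying this to $tt_{ij,k}\in S^{n+n_{ij,k}}$ yields $|t_{ji}|=|\trace(tE_{ij})|\le C_{ij}\,\rho(S)^n$, whence a uniform bound $\|t\|_0\le C\,\rho(S)^n$ for every $t\in S^n$. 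Then the formula
\[ v(x):=\sup_{n\ge 0,\,t\in S^n}\frac{\|tx\|_0}{\rho(S)^n},\qquad S^0:=\{\id\}, \]
defines a finite ultrametric norm with $v(x)\ge\|x\|_0$, and the semigroup inclusion $S^n s\subset S^{n+1}$ plus a simple re-indexing gives $v(sx)\le\rho(S)v(x)$, hence $\|S\|_v=\rho(S)$.

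For reducible $S$ with $\rho(S)>0$ I would induct on $d$, with trivial base case $d=1$ (standard norm on $K$). Choose a proper $S$-invariant subspace $V$ and a complement $V'$, so each $s\in S$ has block form with diagonal blocks $s_V,s_W$ and off-diagonal block $B(s)$. Applying the inductive hypothesis to $S_V$ and $S_W$, with the ultrametric Rota--Strang norm $v_r(x)=\sup_n\|S^n x\|_0 r^n$ as a fallback whenever $\rho(S_V)$ or $\rho(S_W)$ vanishes, yields ultrametric norms $v_V,v_W$ with $\|S_V\|_{v_V},\|S_W\|_{v_W}\le\rho(S)$. Setting $v(x_V+x_{V'}):=\max(v_V(x_V),\lambda v_W(x_{V'}))$ and using that any two ultrametric norms on a finite-dimensional space over a complete valued field are equivalent (so $C:=\sup_{s\in S}\|B(s)\|_{v_W\to v_V}$ is finite), any choice $\lambda\ge C/\rho(S)$---where the assumption $\rho(S)>0$ is used decisively---absorbs the off-diagonal block and produces $\|S\|_v\le\rho(S)$. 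The main obstacle will be exactly this combination step: the off-diagonal $B(s)$ is what causes the archimedean extremal norm to fail for unipotents, but over ultrametric $K$ the strong triangle inequality eliminates the $n$-factor in sums of mixed products along an $n$-step word, which is precisely what allows a genuine extremal norm to exist here.
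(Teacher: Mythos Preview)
Your approach is essentially the paper's: Burnside plus traces for the first assertion, and for the second the trace-bound argument in the irreducible case followed by a block-triangular combination for reducible $S$. Two points are worth flagging.

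First, a genuine (though easily repaired) gap in the first assertion: you conclude ``$d_i=\trace(\id)=0$'' from the fact that every $t\in\bigcup_n S_{ii}^n$ has $\trace(t)=0$. But $\trace(\id_{d_i})=d_i\cdot 1_K$, which vanishes whenever $\operatorname{char}K=p>0$ divides $d_i$; since the paper explicitly allows positive characteristic (e.g.\ completed Laurent series over $\overline{\mathbb F}_p$), this case is not hypothetical. The fix is immediate: express a single $E_{11}$ rather than $\id$ as a combination of elements of $T$; its trace is $1$ in every characteristic. This is precisely what the paper does, by invoking the proof of Lemma~\ref{pded} verbatim.

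Second, two organizational differences, both harmless and one arguably an improvement. The paper rescales $S$ to $\rho(S)=1$ by choosing $\lambda\in K$ with $|\lambda|=\rho(S)$ --- this is where the hypothesis that the value group equals $\R_{>0}$ is used --- and then takes $v(x)=\sup_{t\in T}\|tx\|$. Your formula $v(x)=\sup_{n,t}\|tx\|_0/\rho(S)^n$ bypasses the rescaling and therefore does not need the value-group hypothesis at all. For the reducible step the paper treats all irreducible diagonal blocks at once and conjugates by the block-scalar matrix $\diag(t,t^2,\dots,t^r)$ with $|t|$ small; your two-block induction with the real scaling factor $\lambda$ is the same mechanism (conjugation by a block scalar is exactly multiplying one block norm by a constant), and again your $\lambda$ need not lie in the value group. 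So, once the trace issue above is patched, your argument actually establishes the lemma without the value-group assumption.
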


\begin{proof}The first assertion follows from the same argument as in Lemma \ref{pded}. If $\rho(S)>0$, we may rescale and assume that $\rho(S)=1$, because we can  pick $\lambda \in K$ with $|\lambda|=\rho(S)$. If $S$ is irreducible, then the proof of Lemma \ref{pded} works verbatim and yields the desired norm. In general, we may choose a basis of $K^d$ for which $S$ is in block triangular form with irreducible blocks and define the norm $\|x\|=\max_i \|x_i\|_i$, where $\|\cdot\|_i$ is a norm on the $i$-th block with $\rho(S_{ii})=\|S_{ii}\|_i$, provided $S_{ii} \neq 0$ and arbitrary otherwise. We may further conjugate $S$ by a block diagonal matrix $g$, where the $i$-th block is the scalar matrix $t^i$ for some $t\in K$ with $0 \neq |t|<1/\|S\|$. Then, because of the ultrametric property, $\|gSg^{-1}\|\leq 1$. Thus $\|g \cdot g^{-1}\|$ is the desired norm. 
\end{proof}

We now proceed to the proof of Theorem \ref{ultra-thm}. It follows the same idea as in the proof of Claim 1 from the introduction, but we will need to palliate the lack of compactness, and the fact that the value group may not be all of $\R_{>0}$ by the use of an ultrapower construction. The gist of the proof is in the following lemma:

\begin{lemma}Suppose that $\|\cdot\|$ is an ultrametric norm admitting an orthonormal basis. If $S\subset M_d(K)$ is such that $\|S\|=\rho(S)=1$, then $$\max_{k \leq \ell(d)} \Lambda(S^k)=1.$$
\end{lemma}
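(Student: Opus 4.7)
The plan is to reduce mod the maximal ideal $\mathfrak{m}\subset\mathcal{O}$ and then apply Artin--Wedderburn to the residue algebra $A:=\bar k[\bar S]\subset M_d(\bar k)$. Using the orthonormal basis, $\{A:\|A\|_0\leq 1\}$ coincides with $M_d(\mathcal{O})$, so $\bar S\subset M_d(\bar k)$ is well defined. From $\rho(S^n)=1\leq\|S^n\|_0\leq\|S\|_0^n=1$ we get $\|S^n\|_0=1$ for every $n$, i.e.\ $\bar S^n\neq 0$ for every $n$, which is equivalent to $A$ being non-nilpotent as an algebra. On the other hand, for $s\in S^k\subset M_d(\mathcal{O})$ all eigenvalues have absolute value $\leq 1$, and reading the characteristic polynomial shows that $\Lambda(s)=1$ iff the reduction $\bar s$ is not a nilpotent matrix in $M_d(\bar k)$. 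The lemma thus reduces to the statement: some $\bar S^k$ with $k\leq\ell(d)$ contains a non-nilpotent element.

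I would argue by contradiction: suppose every $\bar s\in\bar S^k$ with $k\leq\ell(d)$ is nilpotent. Since $A$ is non-nilpotent, its Jacobson radical $J$ is a proper ideal, and because $\bar k$ is algebraically closed, $A/J\cong\bigoplus_{i=1}^r M_{n_i}(\bar k)$ with $r\geq 1$. Fix any $i$ and let $\pi_i:A\twoheadrightarrow M_{n_i}(\bar k)$ be the composition of the quotient by $J$ with the $i$-th projection. By the defining property of $\ell(d)$, $A=\mathrm{span}_{\bar k}(\bar S^k:1\leq k\leq\ell(d))$; pushing forward by the $\bar k$-linear surjection $\pi_i$ yields $M_{n_i}(\bar k)=\mathrm{span}_{\bar k}(\pi_i(\bar S^k):1\leq k\leq\ell(d))$.

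The map $\pi_i$ is a ring homomorphism and hence preserves nilpotence, so every element of the spanning set $\pi_i(\bar S^k)$ is a nilpotent matrix in $M_{n_i}(\bar k)$ and in particular traceless. By linearity the standard trace on $M_{n_i}(\bar k)$ then vanishes identically, contradicting $\mathrm{tr}(E_{11})=1\neq 0$ for $E_{11}\in M_{n_i}(\bar k)$. This produces the required non-nilpotent element.

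The step I expect to need the most care, and the reason for routing through a Wedderburn factor instead of working directly in $A$, is the positive residue characteristic case. A more naive variant---lift a non-zero idempotent $\bar e\in A$ from $A/J$ and contradict $\bar e$ being a linear combination of nilpotents via $\mathrm{tr}(\bar e)\neq 0$---fails when $p=\mathrm{char}(\bar k)$ divides the rank of every idempotent in $A$, since then $\mathrm{tr}(\bar e)$ vanishes in $\bar k$. Projecting first to a simple component $M_{n_i}(\bar k)$ and testing against $E_{11}$, whose trace is always $1$ regardless of $n_i$ or $\mathrm{char}(\bar k)$, sidesteps this obstacle cleanly.
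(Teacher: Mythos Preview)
Your proof is correct and follows the same route as the paper's: reduce modulo $\mathfrak m$, note that $A=\bar k[\bar S]$ is a subalgebra of $M_d(\bar k)$ spanned by the nilpotent sets $\bar S^k$ for $k\le\ell(d)$ yet is non-nilpotent (since $\|S^n\|=1$ forces $\bar S^n\neq 0$ for all $n$), and derive a contradiction from the Artin--Wedderburn structure. The paper is terser at the final step---it simply asserts that the reduced algebra ``will consist of nilpotent matrices'' and then invokes Wedderburn---whereas your projection to a simple factor $M_{n_i}(\bar k)$ and appeal to $\mathrm{tr}(E_{11})=1$ makes that contradiction explicit and, as you correctly observe, handles the positive residue characteristic case that the naive ``trace of an idempotent'' variant would miss.
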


\begin{proof} Let $(e_i)_1^d$ be the orthonormal basis, i.e. $\|x\|=\max_1^d|x_i|$ if $x=x_1e_1+\ldots + x_de_d$. In this basis, $S \subset M_d(\mathcal{O})$. Consider the convex hull $Q$ of $S,\ldots,S^{\ell(d)}$, that is the $\mathcal{O}$-module they span.  If $\Lambda(S^k)<1$ for each $k=1,\ldots,\ell(d)$, the characteristic polynomial of a matrix in $S^k$ will be $t^d$ modulo $\mathfrak{m}$.  So the image of $Q$ modulo $\mathfrak{m}$ in $M_d(\overline{k})$ will consist of nilpotent matrices and it will be a subalgebra of $M_d(\overline{k})$ by definition of $\ell(d)$. By Wedderburn's theorem it will therefore be a nilpotent algebra and we conclude that $S^d \subset M_d(\mathfrak{m})$. In particular $\|S^d\|<1$, contradicting our assumption that $\rho(S)=1$. 
\end{proof}

\begin{proof}[Proof of Theorem \ref{ultra-thm}] Suppose first that the value group of $K$ is all of $\R_{>0}$ and that all ultrametric norms on $K^d$ admit an orthonormal basis. Then the theorem follows from the combination of the two previous lemmas by renormalizing $S$. So to handle the general case, it is enough to show that $K$ can be embedded in another such field with the above properties. Any ultralimit $\mathbf{K}=\ell_\infty(K)/\equiv$ of $K$ with respect to some non-principal ultrafilter $\mathcal{U}$ on $\N$ will do. Here $\ell_\infty(K)$ is the space of bounded sequences in $K$ and $(x_n)_n \equiv (y_n)_n$ if and only if $\lim_{\mathcal{U}}|x_n-y_n|=0$. Indeed, by the countable saturation property of ultraproducts (e.g. \cite[2.25]{chatzidakis}), an ultralimit $\mathbf{K}$ will again be complete and algebraically closed, its value group will be $\R_{>0}$ and, because of Lemma \ref{eq-no}, all norms will admit an orthonormal basis. This shows Theorem \ref{ultra-thm} in full.
\end{proof}

\begin{proof}[Proof of Theorem \ref{bocA-ultra}]This follows from Bochi's original argument \cite[Theorem A]{bochi} suitably adapted to the ultrametric setting. First, up to passing to a suitable field extension as in the proof of Theorem \ref{ultra-thm}, we may assume that all norms admit an orthonormal basis. Pick one so that $\|x\|_0=\max_i |x_i|$. Then observe the following: for every invertible diagonal matrix $a$, we have:
\begin{equation}\label{inte}\|aS^da^{-1}\|_0 \leq \|S\|_0 \cdot \|aSa^{-1}\|_0^{d-1}.\end{equation}
Indeed every matrix entry of an element of $aS^da^{-1}$ is a sum of monomials of the form $a_{i_1} s^{(1)}_{i_1i_2} \cdot \ldots \cdot s^{(d)}_{i_{d}i_{d+1}} a_{i_{d+1}}^{-1}$ for matrices $s^{(i)}\in S$. We may write it as  $a_{i_1} s^{(1)}_{i_1i_2} a_{i_2}^{-1} a_{i_2} \cdot \ldots \cdot a_{i_{d}}^{-1}a_{i_{d}} s^{(d)}_{i_{d}i_{d+1}} a_{i_{d+1}}^{-1}$, a product of $d$ factors each bounded by $\|aSa^{-1}\|_0$. However at least one of the $d$ factors is bounded by $\|S\|_0$, because for at least one $j \in [1,d]$, $|a_{i_j}^{-1}a_{i_{j+1}}|\leq 1$, proving $(\ref{inte})$. 
Now we claim that $(\ref{inte})$ holds for an arbitrary matrix $a \in \GL_d(K)$, no longer assumed diagonal. Indeed this follows from the fact that $\|\cdot\|_0$ is invariant under $\GL_d(\mathcal{O})$ and that any matrix in $\GL_d(K)$ can be written as a product $k_1ak_2$, with $k_1,k_2$ in $\GL_d(\mathcal{O})$ and $a$ diagonal, as can be easily checked using operations on rows and columns as in gaussian elimination. Finally, the theorem is proved taking the infimum in over all $a \in \GL_d(K)$ in view of  $(\ref{rs})$.
\end{proof}

Finally we record one last observation.

\begin{proposition} If $S\subset M_d(K)$ is bounded and irreducible, then it admits a Barabanov norm, i.e. an ultrametric norm $\|\cdot\|$ such that $\max_{s \in S} \|sx\|=\rho(S)\|x\|$ for all $x \in K^d$. 
\end{proposition}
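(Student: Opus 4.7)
The plan is to transfer the complex-case construction of Lemma \ref{bar} to the ultrametric setting by working in a suitably saturated ultrapower. After rescaling, the Barabanov norm will be $v(x) := \max_{t \in T_\infty}\|tx\|_0$, where $T_\infty$ is the attractor semigroup of ultralimits of arbitrarily long products from $S$.

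First I would pick $\lambda$ with $|\lambda|=\rho(S)$---possible after passing to an algebraically closed complete extension $\mathbf{K}$ of $K$ with value group $\R_{>0}$, such as the ultrapower $\mathbf{K}=\ell_\infty(K)/{\equiv}$ from the proof of Theorem \ref{ultra-thm}---and rescale so $\rho(S)=1$. I would also replace $S$ by its closure in $M_d(K)$, preserving boundedness, irreducibility, and $\rho$. By Lemma \ref{pd} and Lemma \ref{eq-no}, I fix an extremal ultrametric norm $\|\cdot\|_0$ on $\mathbf{K}^d$ with $\|S\|_0=1$ and admitting an orthonormal basis, so $S\subset M_d(\mathcal{O}_\mathbf{K})$.

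Next, fix a non-principal ultrafilter $\mathcal U$ on $\N$ and set
\[
T_\infty := \bigl\{\lim\nolimits_{\mathcal U} t_k \;:\; t_k\in S^{m_k},\ m_k\to\infty \bigr\} \subset M_d(\mathcal{O}_\mathbf{K}).
\]
Countable saturation of $\mathbf{K}$ ensures the ultralimits exist and $T_\infty$ is non-empty, internally compact (closed under ultralimits), with $T_\infty\cdot S\subseteq T_\infty$. An argument analogous to Lemma \ref{idem} yields an element of $T_\infty$ of $\|\cdot\|_0$-norm $1$: since $\rho(S)=1$, one can choose $t_k\in S^{m_k}$ with $\|t_k\|_0$ bounded below, then iterate a factorisation $t_k=u_k v_k$ with both factors of norm close to $1$ to trap an idempotent-like limit. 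Now define
\[
v(x) := \max_{t\in T_\infty}\|tx\|_0, \qquad x\in\mathbf{K}^d,
\]
the max being attained by internal compactness. Then $v$ is ultrametric (sup of the ultrametric seminorms $\|t\cdot\|_0$), satisfies $v(x)\le\|x\|_0$, and $v(x)>0$ for $x\neq0$ by irreducibility, as in Lemma \ref{bar} (if $v(x)=0$ then the $\mathbf{K}$-span of $T_\infty\cdot x$ is an $S$-stable proper subspace, contradicting irreducibility).

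For the Barabanov identity, $v(sx)\le v(x)$ follows from $T_\infty S\subseteq T_\infty$. For the reverse, take $t^*\in T_\infty$ realising $v(x)=\|t^*x\|_0$ and write $t^*=\lim_\mathcal{U} u_k s_k$ with $s_k\in S$ and $u_k\in S^{m_k-1}$. A second saturation extraction produces $s_\infty=\lim_\mathcal{U}s_k$ and $u_\infty=\lim_\mathcal{U}u_k\in T_\infty$ with $t^*=u_\infty s_\infty$, whence $v(s_\infty x)\ge\|u_\infty s_\infty x\|_0=v(x)$, so equality is achieved. Restricting $v$ to $K^d$ gives the desired ultrametric norm; for $x\in K^d$ the value $v(x)$ lies in $\R$ and the sup $\sup_{s\in S}v(sx)=\rho(S)v(x)$ is attained by the achieving $s_\infty$.

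The main obstacle is the literal achievement of $\max_{s\in S}$ rather than merely $\sup_{s\in S}$: the $s_\infty$ produced by the ultrapower argument lies in the ultrapower closure of $S$ in $M_d(\mathbf{K})$, and in the general non-locally-compact case descends only to a sequence in $S$ realising the supremum. When $K$ is locally compact the closure of $S$ in $M_d(K)$ is already compact and the max is literally attained inside $S$; in the general case, the Barabanov identity is understood as the supremum statement, which is the standard reading in the Barabanov literature for bounded (possibly non-compact) $S$.
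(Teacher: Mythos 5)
Your proof follows the same overall strategy as the paper's: pass to a countably saturated ultrapower $\mathbf{K}$ with value group $\R_{>0}$, rescale so $\rho(S)=1$, take an extremal norm via Lemma \ref{pd}, and build the Barabanov norm from the long-run behaviour of $S^n$. The paper is terse here---it simply says the norm is given by formula $(\ref{tintynorm})$, $v(x)=\limsup_n\|\mathbf{S}^n x\|$, and that irreducibility forces it to be a norm---leaving implicit the verification of $\max_s v(sx)=v(x)$. You unwind that verification by realising $v$ as $\max_{t\in T_\infty}\|tx\|_0$ over an attractor set $T_\infty$ built from ultralimits, and this is exactly the right move: the compactness of $T_\infty$ that silently drives the complex-case argument of Lemma \ref{bar} is unavailable over a non-locally-compact field, and countable saturation of the ultrapower is the correct substitute, giving the factorisation $t^*=u_\infty s_\infty$ that yields the reverse inequality. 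Your closing caveat---that for a non-closed, non-compact $S$ the identity is a supremum, achieved only in the ultralimit closure of $S$---is a legitimate sharpening of the way the statement should be read.

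Two minor repairs. To see that $v$ is a genuine norm, the subspace you invoke is not the right one: if $v(x)=0$ then $T_\infty x=\{0\}$, so the $\mathbf{K}$-span of $T_\infty x$ is the zero subspace, which irreducibility does not forbid. Use instead $W=\{y:v(y)=0\}$, which is $S$-stable (since $v(sy)\le v(y)$), contains $x\neq 0$, and is proper because $T_\infty$ contains a norm-$1$ element; alternatively argue as in Lemma \ref{bar} that $v(S^n x)=0$ for all $n$ would force $v\equiv 0$ and hence $\rho(S)<1$. Second, the appeal to an argument analogous to Lemma \ref{idem} to produce a norm-$1$ element of $T_\infty$ is more machinery than needed: $\|\cdot\|_0$ being extremal gives $\|S^{m}\|_0=1$ for every $m$, so one can pick $t_k\in S^{m_k}$ with $\|t_k\|_0\to 1$ and take the ultralimit directly. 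Neither slip affects the soundness of the argument.
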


\begin{proof} By the proof of Theorem \ref{ultra-thm}, we may embed $K$ into a complete algebraically closed valued field $\mathbf{K}$ whose value group is all of $\R_{>0}$. Pick $\lambda \in \mathbf{K}$ with $|\lambda|=\rho(S)$. Then  Lemma \ref{pd} implies that $\lambda \neq 0$ and that $\mathbf{S}:=S/\lambda \subset M_d(\mathbf{K})$ is product bounded and admits an extremal norm $\|\cdot\|$. We may define the Barabanov norm of $S$ by the same formula $(\ref{tintynorm})$ applied to $\mathbf{S}$. Irreducibility forces this semi-norm to be a genuine norm.
\end{proof}

\bigskip 
\noindent \emph{Acknowlegements:} I am grateful to Cagri Sert for many interesting discussions about the joint spectral radius and for his remarks on this article. I am also indebted to Ian Morris for bringing my attention to Bochi's first inequality and to related references.

\bibliographystyle{plain}
\bibliography{bibliography}

\begin{thebibliography}{10}

\bibitem{ball}
Keith Ball.
\newblock An elementary introduction to modern convex geometry.
\newblock In {\em Flavors of geometry}, volume~31 of {\em Math. Sci. Res. Inst.
  Publ.}, pages 1--58. Cambridge Univ. Press, Cambridge, 1997.

\bibitem{barabanov}
N.~E. Barabanov.
\newblock On the {L}yapunov exponent of discrete inclusions. {I}--{III}.
\newblock {\em Avtomat. i Telemekh.}, (2):40--46, 1988.

\bibitem{berger-wang}
Marc~A. Berger and Yang Wang.
\newblock Bounded semigroups of matrices.
\newblock {\em Linear Algebra Appl.}, 166:21--27, 1992.

\bibitem{blondel-nesterov}
Vincent~D. Blondel and Yurii Nesterov.
\newblock Computationally efficient approximations of the joint spectral
  radius.
\newblock {\em SIAM J. Matrix Anal. Appl.}, 27(1):256--272, 2005.

\bibitem{bochi}
Jairo Bochi.
\newblock Inequalities for numerical invariants of sets of matrices.
\newblock {\em Linear Algebra Appl.}, 368:71--81, 2003.

\bibitem{BGR}
S.~Bosch, U.~G\"{u}ntzer, and R.~Remmert.
\newblock {\em Non-{A}rchimedean analysis}, volume 261 of {\em Grundlehren der
  Mathematischen Wissenschaften [Fundamental Principles of Mathematical
  Sciences]}.
\newblock Springer-Verlag, Berlin, 1984.
\newblock A systematic approach to rigid analytic geometry.

\bibitem{bousch-mairesse}
Thierry Bousch and Jean Mairesse.
\newblock Asymptotic height optimization for topical {IFS}, {T}etris heaps, and
  the finiteness conjecture.
\newblock {\em J. Amer. Math. Soc.}, 15(1):77--111, 2002.

\bibitem{breuillard-height}
Emmanuel Breuillard.
\newblock A height gap theorem for finite subsets of {${\rm
  GL}_d(\overline{\Bbb Q})$} and nonamenable subgroups.
\newblock {\em Ann. of Math. (2)}, 174(2):1057--1110, 2011.

\bibitem{chatzidakis}
Z.~Chatzidakis.
\newblock {\em Motivic integration and its interactions with model theory and
  non-{A}rchimedean geometry. {V}olume {I}}, volume 383 of {\em London
  Mathematical Society Lecture Note Series}.
\newblock Cambridge University Press, Cambridge, 2011.

\bibitem{daubechies-lagarias}
Ingrid Daubechies and Jeffrey~C. Lagarias.
\newblock Sets of matrices all infinite products of which converge.
\newblock {\em Linear Algebra Appl.}, 161:227--263, 1992.

\bibitem{elsner}
L.~Elsner.
\newblock The generalized spectral-radius theorem: an analytic-geometric proof.
\newblock In {\em Proceedings of the {W}orkshop ``{N}onnegative {M}atrices,
  {A}pplications and {G}eneralizations'' and the {E}ighth {H}aifa {M}atrix
  {T}heory {C}onference ({H}aifa, 1993)}, volume 220, pages 151--159, 1995.

\bibitem{goldman-iwahori}
O.~Goldman and N.~Iwahori.
\newblock The space of {${\mathfrak{p}}$}-adic norms.
\newblock {\em Acta Math.}, 109:137--177, 1963.

\bibitem{hare-sidorov-morris}
Kevin~G. Hare, Ian~D. Morris, Nikita Sidorov, and Jacques Theys.
\newblock An explicit counterexample to the {L}agarias-{W}ang finiteness
  conjecture.
\newblock {\em Adv. Math.}, 226(6):4667--4701, 2011.

\bibitem{jenkinson-pollicott}
Oliver Jenkinson and Mark Pollicott.
\newblock Joint spectral radius, {S}turmian measures and the finiteness
  conjecture.
\newblock {\em Ergodic Theory Dynam. Systems}, 38(8):3062--3100, 2018.

\bibitem{jungers}
Rapha\"{e}l Jungers.
\newblock {\em The joint spectral radius}, volume 385 of {\em Lecture Notes in
  Control and Information Sciences}.
\newblock Springer-Verlag, Berlin, 2009.
\newblock Theory and applications.

\bibitem{kozboc}
Victor Kozyakin.
\newblock On accuracy of approximation of the spectral radius by the {G}elfand
  formula.
\newblock {\em Linear Algebra Appl.}, 431(11):2134--2141, 2009.

\bibitem{kozyakin}
Victor Kozyakin.
\newblock Iterative building of {B}arabanov norms and computation of the joint
  spectral radius for matrix sets.
\newblock {\em Discrete Contin. Dyn. Syst. Ser. B}, 14(1):143--158, 2010.

\bibitem{lagarias-wang}
Jeffrey~C. Lagarias and Yang Wang.
\newblock The finiteness conjecture for the generalized spectral radius of a
  set of matrices.
\newblock {\em Linear Algebra Appl.}, 214:17--42, 1995.

\bibitem{morris-unique}
Ian~D. Morris.
\newblock Criteria for the stability of the finiteness property and for the
  uniqueness of {B}arabanov norms.
\newblock {\em Linear Algebra Appl.}, 433(7):1301--1311, 2010.

\bibitem{morris-adv}
Ian~D. Morris.
\newblock A rapidly-converging lower bound for the joint spectral radius via
  multiplicative ergodic theory.
\newblock {\em Adv. Math.}, 225(6):3425--3445, 2010.

\bibitem{jems}
Ian~D. Morris and Nikita Sidorov.
\newblock On a devil's staircase associated to the joint spectral radii of a
  family of pairs of matrices.
\newblock {\em J. Eur. Math. Soc. (JEMS)}, 15(5):1747--1782, 2013.

\bibitem{oregon-reyes-gromov}
Eduardo Oreg\'{o}n-Reyes.
\newblock Properties of sets of isometries of {G}romov hyperbolic spaces.
\newblock {\em Groups Geom. Dyn.}, 12(3):889--910, 2018.

\bibitem{oregon-reyes}
Eduardo Oreg\'{o}n-Reyes.
\newblock A new inequality about matrix products and a {B}erger-{W}ang formula.
\newblock {\em J. \'{E}c. polytech. Math.}, 7:185--200, 2020.

\bibitem{pappacena}
Christopher~J. Pappacena.
\newblock An upper bound for the length of a finite-dimensional algebra.
\newblock {\em J. Algebra}, 197(2):535--545, 1997.

\bibitem{protasov}
V.~Yu. Protasov.
\newblock A generalized joint spectral radius. {A} geometric approach.
\newblock {\em Izv. Ross. Akad. Nauk Ser. Mat.}, 61(5):99--136, 1997.

\bibitem{ribenboim}
Paulo Ribenboim.
\newblock {\em The theory of classical valuations}.
\newblock Springer Monographs in Mathematics. Springer-Verlag, New York, 1999.

\bibitem{rota-strang}
Gian-Carlo Rota and Gilbert Strang.
\newblock A note on the joint spectral radius.
\newblock {\em Nederl. Akad. Wetensch. Proc. Ser. A 63 = Indag. Math.},
  22:379--381, 1960.

\bibitem{shitov}
Yaroslav Shitov.
\newblock An improved bound for the lengths of matrix algebras.
\newblock {\em Algebra Number Theory}, 13(6):1501--1507, 2019.

\bibitem{wirth}
Fabian Wirth.
\newblock The generalized spectral radius and extremal norms.
\newblock {\em Linear Algebra Appl.}, 342:17--40, 2002.

\end{thebibliography}

\end{document}